\documentclass[12pt]{amsart}
\usepackage[latin1]{inputenc}
\usepackage{latexsym, amsfonts, amsmath, 
                      amscd,bezier}
\setcounter{page}{1}
\usepackage[brazil,portuguese,english]{babel}
\usepackage{amssymb}
\usepackage{mathrsfs}      
\usepackage[usenames,dvipsnames,svgnames,table]{xcolor}

\def\Qed{\ifhmode\unskip\nobreak\fi\quad 
  \ifmmode\square\else$\square$\fi}

\usepackage{ulem}

\setlength{\baselineskip}{6mm}

\newtheorem{proposition}{Proposition}[section]
\newtheorem{theorem}{Theorem}[section]

\newtheorem{lemma}{Lemma}[section]

\newtheorem{corollary}{Corollary}[section]

\newtheorem{claim}{Claim}

\numberwithin{equation}{section}

\newcounter{remark}[section]
\newenvironment{remark}
{\refstepcounter{remark}\medskip\noindent{\sc Remark\ \thesection.\theremark:}}{\medskip}

\newcounter{alphatheo}
\newenvironment{alphatheo}
{\refstepcounter{alphatheo}\medskip\noindent{\bf Theorem\ \Alph{alphatheo}.} \it }{\medskip}

\newcounter{example}

\renewenvironment{proof}{\medskip\noindent{\sc Proof:}}{\medskip}

\newcommand{\R}{\mathbb R}

\newcommand{\N}{\mathbb N}
\newcommand{\Z}{\mathbb Z}

\def\erre{\mathbb R}

\def\ze{\mathbb Z}

\def\F{{\mathcal F}}

\def\L{{\mathcal L}}

\def\S{{\mathcal S}}
\def\U{{\mathcal U}}

\def\ccinf{C^\infty_{c}}

\def\Op{\hbox{\rm Op}}
\def\<{\langle}
\def\>{\rangle}
\def\eps{\varepsilon}

\begin{document}

\title[Fractional Hardy-Sobolev inequalities ]{Fractional Hardy-Sobolev inequalities for canceling  elliptic differential operators}

\author {J. Hounie}
\address{Departamento de Matem\'atica, Universidade Federal de S\~ao Carlos, S\~ao Carlos, SP,
13565-905, Brasil}
\email{hounie@dm.ufscar.br}

\author { T. Picon}
\address{Departamento \!de\! Computa\c{c}\~ao\! e \!Matem\'atica, Universidade de S\~ao Paulo, Ribeir\~ao Preto, SP, 14040-901, Brasil}
\email{picon@ffclrp.usp.br.br}

\thanks{Work supported in part by CNPq and FAPESP}
\subjclass[2010]{47F05 35A23 35B45 35F05 35J48  35S05 }

\keywords{$L^{1}$ estimates, Hardy-Sobolev inequality, elliptic operators, canceling operators,  pseudodifferential operators.}

\begin{abstract}{
Let $A(D)$ be an elliptic homogeneous linear differential operator of order $\nu$ on $\R^{N}$, $N \geq 2$, from a complex vector space E to a complex vector space F. In this paper we show that if $\ell\in\R$ satisfies $0< \ell <N$ and $\ell \leq \nu$, then the estimate 
\begin{equation}\nonumber
\left(\int_{\R^{N}}| (-\Delta)^{(\nu-\ell)/2}u(x)|^{q}|x|^{-N+(N-\ell)q}\,dx\right)^{1/q}\leq C \|A(D)u\|_{L^{1}}
\end{equation} 
holds for every $u \in C_{c}^{\infty}(\R^{N};E)$ and $1\le q<\frac{N}{N-\ell}$ if and only if $A(D)$ is canceling in the sense of V. Schaftingen \cite{VS}. Here  $(-\Delta)^{a/2}u$ is the fractional Laplacian  defined as a Fourier multiplier. This estimate extends,  implies and unifies a series of classical inequalities discussed by P. Bousquet and V. Schaftingen in \cite{BVS}. We also present a local version of the previous inequality for operators with smooth variables coefficients.}
\end{abstract}

\maketitle

\tableofcontents

\section{Introduction}


P. Bousquet and J. Van Schaftingen \cite{BVS} proved that  an elliptic homogeneous differential operators $A(D)$ on $\R^{N}$, $N\ge2$, with constant coefficients  of order $\nu$ from a vector space E to vector space F  satisfies the Hardy-Sobolev inequality
\begin{equation}\label{eq1}
 \left(\int_{\R^{N}}\frac{|D^{\nu-\ell}u(x)|^{q}}{|x|^{N-(N-\ell)q}}\,dx\right)^{1/q} \leq C \int_{\R^{N}}|A(D)u(x)|\,dx, 
\end{equation}
for every $u \in C_{c}^{\infty}(\R^{N};E)$, 
$\ell \in \left\{ 1,...,\min(\nu,N-1)\right\}$ and $1\leq q<N/(N-\ell)$
if and only if $A(D)$  has the canceling property introduced in \cite{VS}, i.e.,
\begin{equation}\label{cancel1}
\bigcap_{\xi \in \R^{N}\backslash \left\{ 0 \right\}}A(\xi)[E]=\left\{  0 \right\},
\end{equation}
where $A(\xi)$ is the symbol of the operador $A(D)$. Operators with the canceling property are characterized by satisfying the Sobolev-Gagliardo-Nirenberg estimate for the $L^{1}$ norm \cite[Theorem 1.1]{VS}. Note that the latter estimate that may be understood as the limiting case of \eqref{eq1} when $q=N/(N-\ell)$ and $\ell=1$.

Applying the previous inequality for $A(D)=D^{\nu}\doteq(D^\alpha)_{|\alpha|=\nu}$ and $\nu<N$, $N\ge2$, they recover the classical Hardy {estimate} in $L^{1}$ norm
\begin{equation}\label{eq2}
\int_{\R^{N}}\frac{|u(x)|}{|x|^{\nu}}dx \lesssim  \int_{\R^{N}}|D^{\nu}u(x)|dx, 
\quad u \in C_{c}^{\infty}(\R^{N};E)
\end{equation}
and the Hardy-Sobolev inequalities
\begin{equation}\label{eq2.1}
\left(\int_{\R^{N}}\frac{|D^{\nu-1}u(x)|^{q}}{|x|^{N-(N-1)q }}dx\right)^{1/q} \lesssim \int_{\R^{N}}|D^{\nu}u(x)|dx, \quad u \in C_{c}^{\infty}(\R^{N};E), 
\end{equation}
$1\leq q<N/(N-1)$.
However, many questions related to {the inequalities \eqref{eq2} and \eqref{eq2.1}} still remain. 

In this paper we carry further the study of Hardy-Sobolev inequalities and present some new global estimates for elliptic canceling homogeneous differential operators with constant coefficients as well as local estimates for elliptic canceling differential operators with variable coefficients.

We start by looking at the exponent $\ell$ in the Hardy-Sobolev inequality \eqref{eq1} which is linked to the order of the derivatives on the left hand side.
A natural question is to allow  $\ell$ to take non discrete values after the introduction of appropriate fractional derivatives in the left hand side. 
In order to do so we consider positive fractional powers of the Laplacean defined by the multiplier operator 
\[
g=(-\Delta)^{a/2}f\quad\Longleftrightarrow\quad
\widehat{g}(\xi)=|\xi|^{a/2}\widehat{f}(\xi),\quad  f\in\S(\R^N), \quad a\ge0,
\]
which gives integral powers of the Laplacean when $a$ is an even integer and are related
to the Riesz potentials defined for $-N<a<0$. Hence, $(-\Delta)^{a/2}$ maps continuously $\S(\R^N)$    into $L^2(\R^N)\subset\S'(\R^N)$ for  $a\ge0$.

Our first result is the following

\begin{alphatheo}
Let $A(D)$ be an elliptic homogeneous linear differential operator of order $\nu$ on $\R^{N}$, $N\ge2$, from $E$ to $F$ and assume that $0<\ell<N$ and $\ell\le\nu$. The estimate
\begin{equation}\label{main4}
\left(\int_{\R^{N}}| (-\Delta)^{(\nu-\ell)/2}u(x)|^{q}|x|^{-N+(N-\ell)q}\,dx\right)^{1/q}\leq C \int_{\R^{N}}|A(D)u(x)|\,dx,
\end{equation} 
holds for every $u \in C_{c}^{\infty}(\R^{N};E)$,  $1\le q<\frac{N}{N-\ell}$ and some $C>0$ if and only if $A(D)$ is canceling .
\end{alphatheo}

The restriction $\ell<N$ in the classical inequality \eqref{eq2}  is linked to the fact that $|x|^{-s}$ is not integrable when  $s=N$ which could potentially blow-up the left hand-side. However we may consider the limiting case $\ell=N$ if $u(x)$ 
vanishes at $x=0$. 
Our second result is a version of \eqref{eq2} for $\ell=N$, namely 

 \begin{alphatheo}\label{teoBB}
Let $A(D)$ be an elliptic and canceling homogeneous linear differential operator of order $\nu$ on $\R^{N}$, $N\ge2$, from $E$ to $F$. If $\nu=N$ then
\begin{equation}\label{main2.2a}
\int_{\R^{N}}\frac{|u(x)|}{|x|^{N}}\,dx\leq C \int_{\R^{N}} |A(D)u(x)|\,(1+|\log{|x|}) \,dx, 
\end{equation} 
holds for some $C>0$ and all $u \in C_{c}^{\infty}(\R^{N};E)$ such that $u(0)=0$.
\end{alphatheo} 

Estimate \eqref{main2.2a} is somewhat similar to an estimate given by \cite[Theorem 3.3]{Tri2} where the weight $|x|^{\nu}$ is present on the right-hand side  (this phenomenon was also observed in \cite[Remark 4.20]{TM}).    

 We consider now linear differential operators $A(x,D)$ of order $\nu$ with smooth complex coefficients in $\Omega \subset \R^{N}$, $N\ge2$, from a complex vector space $E$ to a complex vector space $F$. We say that $A(x,D)$  is canceling at $x_{0} \in \Omega$ if its principal part evaluated at $x_{0}$ denoted by $A_{\nu}(x_{0},D)$ is canceling in the sense of \eqref{cancel1}. If this holds for every $x_{0} \in \Omega$ we say that $A(x,D)$ is canceling.  Theorem 3.2 in \cite{HP3} asserts that $A(x,D)$ is elliptic and canceling if and only if for every $x_{0} \in \Omega$ is contained in a ball $B=B(x_{0},r) \subset \Omega$ such that the a priori estimate
\begin{equation}\label{hpmain}
\|D^{\nu-1}u\|_{L^{N/(N-1)}}\leq C \|A(x,D)u\|_{L^{1}}, \quad u \in C_{c}^{\infty}(B;E),
\end{equation}
holds for some $C>0$.

Our third result is the following local version of Hardy-Sobolev inequalities for differential operators with variable coefficients. Its statement involves pseudo-differential operators in the H\"ormander classes 
$\Op S^m_{\rho,\delta}$, introduced in \cite{H1} which contains the class of classical pseudo-differential operators and is now standard (for additional reading on the subject of pseudo-differential operators we refer to \cite{H2} and \cite{Ta}).
As it is the case in \eqref{hpmain}, the nature of these inequalities is local since we are dealing with (not necessarily homogeneous) operators with variable coefficients. 



\begin{alphatheo}
Let $A(x,D)$ of order $\nu$ be  as before and assume that $0<\ell<N$ and $\ell\le\nu$. If $A(x,D)$ is elliptic and canceling in $\Omega$  then for every $x_0\in\Omega$, $1\le q <\frac{N}{N-\ell}$, and any properly supported pseudo-differential operator 
$P_{\nu-\ell}(x,D)  \in \Op S_{1,\delta}^{\nu-\ell}(\Omega)$, $0\le\delta<1$,
there exists a neighborhood $\U$ of $x_0$ and $C>0$ such that 
\begin{equation}\label{maineq}
\bigg(\int_{\R^{N}}\frac{|P_{\nu-\ell}\,u(x)|^{q}}{|x|^{N-(N-\ell)q}}\,dx\bigg)^{1/q}
\leq C \int_{\R^{N}}|A(x,D)u(x)|\,dx
\end{equation} 
holds for every $u \in C_{c}^{\infty}(\U;E)$.
\end{alphatheo}

When $\nu-\ell\in\{1,\dots,\min(\nu,N-1)\}$ we may take $P_{\nu-\ell}=D^{\nu-\ell}$ and
\eqref{maineq} becomes a local form of \eqref{eq1}. For general $\nu-\ell$ we may choose $P_{\nu-\ell}$ with principal symbol $|\xi |^{\nu-\ell}$ so $P_{\nu-\ell}$ may be regarded as a pseudo-differential approximation of the fractional Laplacean $(-\Delta)^{(\nu-\ell)/2}$ and \eqref{maineq} looks like a local form of \eqref{main4}. 
In Subsection \ref{sec4.2} we will show applications of \eqref{maineq} to some classes of elliptic canceling operators associated to systems of smooth vector fields with complex coefficients.

{ The paper is organized as follows. Section \ref{section2} is devoted to the proof of inequality \eqref{main4} where we present a key  $L^{1}$ estimate that holds when  $A(D)$ has the cancellation property (see Lemma \ref{lemmafrac}). The discussion on the necessity of this condition is presented in Subsection \ref{subsection2.2}. In Theorem \ref{thm2.1} we show that the condition of ellipticity is also necessary when $q$ and $\ell$ belong to a special range. Comments and applications of Theorem A inspired in the inequalities from \cite{BVS} are the subject of Subsection \ref{sec 4}, in particular we show that Theorem A implies the Hardy-Sobolev inequality \eqref{eq1}. The proof of Theorem B, that combines the machinery from Section \ref{section2} with variants of a Hardy inequality with weights, are dealt with in Section \ref{secB}. Finally, in Section 4, we prove Theorem C and discuss some applications for operators $A(x,D)$ with smooth variables coefficients related to elliptic overdetermined systems of vector fields. }

\section{Fractional Hardy-Sobolev estimates }\label{section2}

This section is devoted to the proof of Theorem A and we will assume throughout that $A(D)$ is elliptic and canceling of order $\nu$ from $E$ to $F$. 

\subsection{The cancellation condition is sufficient}
The first steep is to write  
$(-\Delta)^{(\nu-\ell)/2}$ as a composition product with $A(D)$ as one of the factors.
Consider the function $\xi\mapsto H(\xi)\in L(F,E)$ defined by
\[H(\xi)=|\xi|^{\nu-\ell}(A^{\ast}\circ A)^{-1}(\xi)A^{\ast}(\xi) \]
that is smooth in $\R^{N}\backslash \left\{0\right\} $ and homogeneous of 
degree $-\ell$. Here $A^{\ast}(\xi)$ is the symbol of adjoint operator $A^{\ast}(D)$.
Since we are assuming that $0<\ell<N$ then  $H$ is a locally integrable  tempered distribution and its inverse Fourier transform $G(x)$,
\[
G\doteq\F^{-1}H,  \quad \widehat G=H,
\] 
is a locally integrable tempered distribution homogeneous of degree $-N+\ell$ that satisfies 
\begin{equation}\label{decfrac}
(-\Delta)^{(\nu-\ell)/2}u(x)=\int_{\R^{N}}G(x-y)[A(D)u(y)]\,dy, \quad u \in C_{c}^{\infty}(\R^{N};E). 
\end{equation}
Moreover 
\begin{equation} \label{interc}
\bigcap_{x \in \R^{N} \backslash \left\{ 0 \right\}} ker\, G(x) = \bigcap_{\xi \in \R^{N} \backslash \left\{ 0 \right\}} ker\, A^{\ast}(\xi) 
\end{equation}
as  follows from the definition of $H$.

The following lemma will be essential in the proofs of Theorem A (a local version for operators with variable coefficients will be important in the proof of Theorem C). 
It depends in a significant  way on the estimate (\textit{cf. }\cite[Lemma 2.2]{BVS})
\[
\left| \int_{\R^N} \psi(y)\cdot A(D)u(y)\,dy\right|\le 
C\sum_{j=1}^m \int_{\R^N}|A(D)u(y)|\,|y|^j\, |D_j\psi|\,dy \tag{$\dagger$}
\]
where $A(D)$ is as above, $\psi(y)\in\ccinf(\R^N;F)$, and $m$ is a positive integer that depends on $A(D)$. 

\begin{lemma} \label{lemmafrac}
Let $A(D)$ be an elliptic and canceling homogeneous linear differential operator of order 
$\nu$ on $\R^{N}$ from $E$ to $F$ and assume that $0<\ell<N$, $\ell\le\nu$  and 
$1\le q<\frac{N}{N-\ell}$. If $K(x,y)$ is a locally integrable function in 
$\R^N\times\R^N$  satisfying 
\begin{align}
|K(x,y)|&\leq C |x-y|^{\ell-N}, \quad x\neq y, \label{orderk}\\
|\partial_{y}K(x,y)|&\leq C |x-y|^{\ell-N-1}, \quad x\neq y, \label{orderk2}
\end{align}
then the a priori estimate
\begin{align*}
\bigg(\int_{\R^{N}}\bigg|\int_{\R^{N}} K(x,y)&A(D)u(y)dy\bigg|^{q} |x|^{-N+(N-\ell)q}\,dx\bigg)^{1/q}\\
            &\le C\int_{\R^{N}} |A(D)u(x)|\,dx,\quad u \in C_{c}^{\infty}(\R^{N};E),
\end{align*}
holds for some $C>0$.
\end{lemma}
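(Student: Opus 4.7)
The plan is to reduce the integral inequality to a pointwise estimate on $Tu(x):=\int K(x,y)A(D)u(y)\,dy$ via the cancellation estimate $(\dagger)$, and then to pass from the pointwise bound to the target weighted $L^q$ inequality using classical Riesz-potential techniques.

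The key first step is to approximate $K(x,\cdot)$ by smooth compactly supported test functions so that $(\dagger)$ becomes applicable. For each fixed $x\neq 0$, I would choose cutoffs $\chi_R,\zeta_\epsilon\in\ccinf(\R^N)$, with $\chi_R\equiv 1$ on $\supp u$, and $\zeta_\epsilon$ vanishing on $\{|y-x|\le\epsilon\}$ and equal to $1$ on $\{|y-x|\ge 2\epsilon\}$. The regularized test function $\psi_{x,\epsilon}(y):=K(x,y)\chi_R(y)\zeta_\epsilon(y)$ lies in $\ccinf(\R^N;F)$ (after pairing componentwise with a fixed covector of $E$ if necessary), since $K$ is smooth off the diagonal. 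Apply $(\dagger)$ to $\psi_{x,\epsilon}$ and let $\epsilon\to 0$: the left-hand side converges to $Tu(x)$ by dominated convergence using $|K(x,y)|\le C|x-y|^{\ell-N}$, while the contributions from $D_j\zeta_\epsilon$ (supported on an annulus of width $\sim\epsilon$ about $x$) vanish in the limit by the integrability of $|x-y|^{\ell-N}$ on that annulus.

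Using the hypothesis $|\partial_y K(x,y)|\le C|x-y|^{\ell-N-1}$, the surviving first-order term in $(\dagger)$ gives the pointwise bound
\begin{equation*}
|Tu(x)|\,\le\,C\int_{\R^N} |A(D)u(y)|\,|y|\,|x-y|^{\ell-N-1}\,dy,
\end{equation*}
which, after splitting $|y|\le |x-y|+|x|$, is dominated by $I_\ell|A(D)u|(x)+|x|\cdot I_{\ell-1}|A(D)u|(x)$, where $I_s$ denotes the Riesz potential of order $s$. Integrating against $|x|^{-\sigma}\,dx$ with $\sigma=N-(N-\ell)q$: for $q=1$, Fubini combined with the scaling identity $\int_{\R^N}|x-y|^{\ell-N-1}|x|^{-\ell}\,dx\asymp |y|^{-1}$ closes the estimate; for $1<q<N/(N-\ell)$, a weighted Hardy-Littlewood-Sobolev (Stein-Weiss) inequality, whose hypotheses are verified in our parameter range, yields the stated bound by $C\|A(D)u\|_{L^1}$.

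The main obstacle is controlling the higher-order derivative terms ($j\ge 2$) that appear in $(\dagger)$, since the hypotheses on $K$ only provide $|K|$ and $|\partial_y K|$ bounds. A workable resolution is to mollify the kernel at a scale adapted to $|y-x|$, or to transfer higher derivatives of $\psi_{x,\epsilon}$ onto $A(D)u$ via integration by parts so that the cancellation property is applied in a form that effectively requires only first-order derivative control of the kernel; the smoothness and compact support of $A(D)u$ allow these boundary contributions to be absorbed into the first-order pointwise bound above.
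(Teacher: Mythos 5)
Your plan is to apply the cancellation estimate $(\dagger)$ with the kernel $K(x,\cdot)$ (regularized) as the test function, obtain a pointwise bound on $Tu(x)$, and then integrate it in the weighted $L^q$ norm. This is not what the paper does, and there are two genuine obstructions that I do not think your sketch resolves.

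First, you correctly identify the difficulty that $(\dagger)$ involves $\int |A(D)u(y)|\,|y|^j\,|D_j\psi|\,dy$ for $j=1,\dots,m$ while the hypotheses on $K$ only control $|K|$ and $|\partial_y K|$. Your proposed fix (``mollify at a scale adapted to $|y-x|$'' or ``transfer derivatives onto $A(D)u$ by integration by parts'') is not an argument: the higher-order terms in $(\dagger)$ come from the structure of $A(D)$ and cannot be traded for first-order ones without a new estimate on $\partial_y^j K$, and integrating by parts against $A(D)u$ moves you away from the $L^1$-norm you need on the right. As written, this step is a genuine gap, not a technicality.

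Second, and more seriously, the pointwise bound you extract from the $j=1$ term,
\begin{equation*}
|Tu(x)|\,\lesssim\,\int_{\R^N}|A(D)u(y)|\,|y|\,|x-y|^{\ell-N-1}\,dy,
\end{equation*}
is actually divergent for $0<\ell\le 1$: near $y=x$ the integrand behaves like $|x-y|^{\ell-N-1}$, whose local integral is $\int_0 r^{\ell-2}\,dr=\infty$ when $\ell\le 1$. Equivalently, in your Riesz-potential split $I_\ell|A(D)u|+|x|\,I_{\ell-1}|A(D)u|$, the second operator $I_{\ell-1}$ has a non-locally-integrable kernel when $\ell\le 1$ and is not defined on $L^1$. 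So even granting the $j\ge 2$ terms, the argument fails on a significant part of the parameter range $0<\ell<N$.

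The paper avoids both problems by never feeding derivatives of $K$ into $(\dagger)$. It writes $K=K_1+K_2$ with $K_1(x,y)=\psi(y/|x|)\,K(x,0)$, where $\psi\in\ccinf$ equals $1$ near the origin. The test function in $(\dagger)$ is the fixed smooth cutoff $\varphi(y)=\psi(y/|x|)f$ (not the kernel), whose derivatives of every order vanish on $B_{|x|/4}$; this gives $|y|^j|D^j\varphi(y)|\lesssim |y|/|x|$ for all $j\ge 1$, so all the higher-order terms in $(\dagger)$ are harmless and only $|K(x,0)|\lesssim|x|^{\ell-N}$ is used. The remainder $K_2=K-K_1$ satisfies the uniform bound $|K_2(x,y)|\lesssim |x-y|^{\ell-N}$ (obtained from \eqref{orderk} and the mean-value theorem with \eqref{orderk2}), and is handled by Minkowski's inequality and a scale-invariant computation, with no appeal to cancellation at all. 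You should redo the argument following this decomposition rather than applying $(\dagger)$ to the kernel itself.
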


Note that  estimate \eqref{main4} in Theorem A follows right away from identity \eqref{decfrac} and Lemma \ref{lemmafrac} since $K(x,y)\doteq G(x-y)$ satisfies \eqref{orderk} and \eqref{orderk2}. Indeed, $\widehat{G}(\xi)=|\xi|^{-\ell}\left\{ |\xi|^{\nu}  \Gamma(\xi) \right\}$  with $\Gamma(\xi)\doteq (A\circ A^{\ast})^{-1}(\xi)A^{\ast}(\xi) $ is  
homogeneous of degree $-\ell$ and its restriction  to the sphere $S^{N-1}$ is smooth. Thus, $G(x)$ is  homogeneous of degree $\ell-N$, $D G(x)$ is  homogeneous of degree $\ell-N-1$ and their restrictions to $S^{N-1}$ are smooth, so the required estimates are  immediate. To complete the proof of Theorem A we must prove Lemma \ref{lemmafrac} which we do next.

\begin{proof}
Let $\psi \in C_{c}^{\infty}(B_{1/2})$ be a cut-off function such that $\psi \equiv 1$ on $B_{1/4}$ and write $K(x,y)=K_{1}(x,y)+K_{2}(x,y)$ with
\begin{equation}\nonumber
\begin{array}{ll}
\displaystyle{K_{1}(x,y)=\psi\left(\frac{y}{|x|}\right)K(x,0)}.
\end{array}
\end{equation}
In order to obtain the estimate it is enough conclude that 
\begin{align*}
J_{i}\doteq &\left(\int_{\R^{N}}\left| \int_{\R^{N}} K_{i}(x,y)A(D)u(y)dy   \right|^{q} |x|^{-N+(N-\ell)q} \, \,dx\right)^{1/q} \\
            &\lesssim \int_{\R^{N}} |A(D)u(x)|\,dx, \quad i=1,2,
\end{align*}
for every $u \in C_{c}^{\infty}(\R^{N};E)$  and $q \in [1,\frac{N}{N-\ell})$. Thus,
\begin{align*}
J_{1}&=
\left(\int_{\R^{N}}\left| \int_{\R^{N}} \psi\left(\frac{y}{|x|}\right)A(D)u(y)dy\right|^{q}\frac{|K(x,0)|^{q}}{|x|^{N-(N-\ell)q}}\,dx\right)^{1/q}    \\
& \lesssim \displaystyle{\left(\int_{\R^{N}}\left| \int_{B_{|x|/2}} \frac{|y|}{|x|}A(D)u(y)dy\right|^{q}\frac{1}{|x|^{N}}\,dx\right)^{1/q}}   \\
&=\displaystyle{\left(\int_{\R^{N}}\left| \int_{B_{|x|/2}}  |y|A(D)u(y)dy\right|^{q}\frac{1}{|x|^{N+q}}\,dx\right)^{1/q}}.
\end{align*}
To obtain the  inequality we have used $(\dagger)$ to estimate the integral with respect to $dy$ with the choice $\varphi(y)\doteq\psi(y/|x|) f$ where, for fixed $x$, $f$ is a unit vector in $F$ chosen so that 
\[
\left|\int_{\R^{N}} \psi\left(\frac{y}{|x|}\right)f\cdot A(D)u(y)dy \right|=
\left|\int_{\R^{N}} \psi\left(\frac{y}{|x|}\right)A(D)u(y)dy\right|
\]
while  the bound for $|K(x,0)|$ comes from  \eqref{orderk}. Using Minkowski inequality we have 
\begin{align*}
J_{1}& \lesssim \int_{\R^{N}}|y||A(D)u(y)|\left( \int_{\R^{N}\backslash B(0,2|y|)} \frac{1}{|x|^{N+q}}\,dx \right)^{1/q}dy \\
& \lesssim \int_{\R^{N}}|y||A(D)u(y)| {\frac{1}{|y|}dy} \\
& = \|A(D)u\|_{L^{1}} .
\end{align*}

To estimate $J_{2}$ let us analyze the kernel $K_{2}(x,y)$. Since $K_{2}(x,y)=K(x,y)$ for $2|y|>|x|$ then from  \eqref{orderk} we have 
\begin{equation}\nonumber
|K_{2}(x,y)|\leq C |x-y|^{\;\ell-N}, \quad 2|y|>|x|.
\end{equation}
If $|x|>4|y|$ then $K_{2}(x,y)=K(x,y)-K(x,0)$ that implies, using \eqref{orderk2},
\begin{align*}
|K_{2}(x,y)|&\leq |y|\sup_{z \in [0,y]}|\partial_{y}K(x,z)| \\
&\leq C |y| |x-y|^{\;\ell-N-1}\\
&\lesssim |x-y|^{\;\ell-N}.
\end{align*}
A similar estimate holds  in the region $|x|<4|y|<2|x|$ thanks to the identity  
\begin{equation*}
K_{2}(x,y)=\left[1-\psi \left(\frac{y}{|x|}\right)\right]K(x,y)+\psi\left(\frac{y}{|x|}\right)\left[ K(x,y)-K(x,0) \right].
\end{equation*}
Using once more Minkowski's inequality we have
\begin{align*}
J_{2} \leq  \int_{\R^{N}} \left( \int_{\R^{N}}\frac{|K_{2}(x,y)|^{q}}{|x|^{N-(N-\ell)q}}  \,dx \right)^{1/q} |A(D)u(y)|dy.
\end{align*}
Since $q<\frac{N}{N-\ell}$ we have
\begin{align*}
\int_{\R^{N}}\frac{|K_{2}(x,y)|^{q}}{|x|^{N-(N-\ell)q}}\,dx & \leq 
\int_{B_{2|y|}}\frac{1}{|x-y|^{(N-\ell)q}|x|^{N-(N-\ell)q}}\,dx\\
           &\qquad\qquad\qquad\qquad\qquad\qquad+ \int_{\R^{N} \backslash B_{2|y|}}\frac{|y|^{q}}{|x|^{N+q}}\,dx\\
& \lesssim 1,
\end{align*}
so $J_{2} \lesssim \|A(D)u\|_{L^{1}}$ as we wished.
 \Qed
\end{proof}

\subsection{The cancellation condition is necessary}\label{subsection2.2}

The proof follows the lines of \cite[Section 3]{BVS}. Consider 
\[
f \in \bigcap_{\xi \in \R^{N}\backslash \left\{ 0 \right\}}A(\xi)[E]\subset F.
\] 
Let $\psi \in S(\R^{N})$ satisfy $\hat{\psi}(\xi)=1$ in $B(0,1)$ so in particular  
$\int \psi =1$ and set
\[
p_{\lambda}(x)=\lambda^{N}\psi( \lambda x)-\frac{1}{\lambda^{N}}\psi\left(\frac{x}{\lambda} \right), \quad \lambda \geq 1.
\]
Clearly $\|p_{\lambda}\|_{L^{1}} \leq 2 \|\psi \|_{L^{1}}$ and 
$\widehat{p}_{\lambda}(\xi)=\widehat\psi(\xi/\lambda)-\widehat\psi(\lambda \xi)=0$ on the ball $|\xi|<1/\lambda$ for each $\lambda\ge1$.
Setting
\[
\widehat{u}_{\lambda}(\xi)=\widehat{p_{\lambda}}(\xi) (A^{\ast}\circ A)^{-1}(\xi)A^{\ast}(\xi)f
\]
we see that
$u_{\lambda} \in S(\R^{N};E)$ because $\widehat{u}_{\lambda}(\xi)$
vanishes on a neighborhood of the origin. Writing $f=A(\xi)(e_\xi)$, $\xi\ne0$, we check that
$A(D)u_{\lambda}=p_{\lambda}f$. 
By a density argument, we may apply \eqref{main4} to $u_{\lambda}$ to get

\begin{equation}\label{main4a}
\left(\int_{\R^{N}}\frac{| (-\Delta)^{(\nu-\ell)/2}u_{\lambda}(x)|^{q}}{|x|^{{N-(N-\ell)q}}}\,dx\right)^{1/q}\, \,dx \leq C \int_{\R^{N}}|p_{\lambda}(x)f|\,dx
\end{equation} 
for some $C>0$, $1\le q <\frac{N}{N-\ell}$, which implies that the right hand side of the inequality is bounded by a constant independent of $\lambda\ge1$. Note that 
\begin{equation}\nonumber
(-\Delta)^{(\nu-\ell)/2}u_{\lambda}(x)= \left( G \ast A(D)u_{\lambda}\right)(x) = \left( G \ast p_{\lambda}\right)(x)f  
\end{equation}
where $G$ was defined at beginning of {Subsection 2.1}, so \eqref{main4a} may be written as
\begin{equation*}
\left(\int_{\R^{N}}| G \ast (p_{\lambda}(x)f)|^{q}|x|^{-N+(N-\ell)q}\,dx\right)^{1/q}\leq C \int_{\R^{N}}|p_{\lambda}(x)f|\,dx.
\end{equation*}
\begin{claim}\label{claim1}
$\displaystyle{\lim_{\lambda \rightarrow \infty} G \ast p_{\lambda} (x)=G(x)}$ for $x \in \R^{N} \backslash \left\{ 0 \right\}$.
\end{claim}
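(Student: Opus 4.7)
The plan is to split $(G\ast p_\lambda)(x)$ into the two contributions coming from the concentrating factor $\lambda^{N}\psi(\lambda\,\cdot\,)$ and the spreading factor $\lambda^{-N}\psi(\cdot/\lambda)$ of $p_\lambda$, and to treat each by a change of variables. Substituting $y=w/\lambda$ in the first integral and $y=\lambda w$ in the second, and then using the homogeneity $G(\lambda z)=\lambda^{\ell-N}G(z)$ in the second, one gets
\begin{equation*}
(G\ast p_\lambda)(x) \;=\; \underbrace{\int_{\R^{N}} G\!\left(x-\tfrac{w}{\lambda}\right)\psi(w)\,dw}_{I_{1}(\lambda)} \;-\; \underbrace{\lambda^{\ell-N}(G\ast\psi)\!\left(\tfrac{x}{\lambda}\right)}_{I_{2}(\lambda)}.
\end{equation*}

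The bulk of the work is showing $I_{1}(\lambda)\to G(x)$. Since $\widehat\psi(0)=1$, i.e.\ $\int\psi=1$, this is equivalent to $\int[G(x-w/\lambda)-G(x)]\,\psi(w)\,dw\to 0$. Pointwise, $G(x-w/\lambda)\to G(x)$ by continuity of $G$ off the origin, but a direct appeal to dominated convergence fails because $x-w/\lambda$ can hit the singular locus $\{0\}$ (precisely when $w$ lies on the ray from $0$ through $\lambda x$). My remedy is to split the domain at $|w|=\lambda|x|/2$: on $\{|w|\le\lambda|x|/2\}$ one has $|x-w/\lambda|\ge|x|/2$, so $|G(x-w/\lambda)|\le M_x := \sup_{|z|\ge|x|/2}|G(z)|<\infty$ and the integrand is dominated by $(M_x+|G(x)|)|\psi(w)|\in L^{1}$, giving convergence by DCT; on the complementary region, rescaling $v=w/\lambda$ and combining the Schwartz decay $|\psi(\lambda v)|\le C_k(1+\lambda|v|)^{-k}$ with the crude bound $|G(x-v)|\le C|x-v|^{\ell-N}$ controls the tail by $C_k\lambda^{N-k}\int_{|v|>|x|/2}|G(x-v)|\,|v|^{-k}\,dv$, which vanishes once $k$ is chosen large.

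The term $I_{2}(\lambda)$ is much softer: $G\ast\psi$ is continuous (in fact smooth) on $\R^{N}$, because $G\in L^{1}_{\mathrm{loc}}$ (its singularity of order $\ell-N$ is integrable at the origin since $\ell>0$) and $\psi\in\S(\R^{N})$. So $(G\ast\psi)(x/\lambda)\to(G\ast\psi)(0)$ stays bounded, while the prefactor $\lambda^{\ell-N}\to 0$ precisely because $\ell<N$; hence $I_{2}(\lambda)\to 0$. Subtracting the two limits proves the claim. The principal obstacle is the treatment of $I_{1}$: the singularity of $G$ forbids a naive global dominating function along the segment $\{x-tw:t\in(0,1]\}$ traced out as $\lambda$ varies, and the domain-splitting sketched above is the cleanest way around it.
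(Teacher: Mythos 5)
Your decomposition $G\ast p_\lambda = I_1 - I_2$ is, after the change of variables, exactly the paper's $J_1-J_2$ decomposition (the paper writes $J_1=\int(G(x-y)-G(x))\lambda^N\psi(\lambda y)\,dy$ and $J_2=\int G(x-y)\lambda^{-N}\psi(y/\lambda)\,dy$, and your $I_1-G(x)=J_1$, $I_2=J_2$). What differs is how you close each piece. For $I_1$ the paper keeps things quantitative: it chooses an exponent $N<\alpha<N+1$, uses a Lipschitz bound $|G(x-y)-G(x)|\lesssim |y|/|x|^{N-\ell+1}$ on $|y|<|x|/2$ and the crude $|x-y|^{\ell-N}$ bound on the far region, and extracts the explicit rate $|J_1|\lesssim\lambda^{N-\alpha}|x|^{\ell-\alpha}$. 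You instead split at $|w|=\lambda|x|/2$, apply dominated convergence on the near region (where $|x-w/\lambda|\ge|x|/2$ supplies the domination), and kill the tail by Schwartz decay with $k>N$. Both are sound; the paper's version additionally records the decay rate in $\lambda$, which your DCT argument forgoes. For $I_2$ your argument is cleaner than the paper's: the paper again introduces an auxiliary exponent $\ell<\beta<N$ and a two-sided bound to get $|J_2|\lesssim\lambda^{\beta-N}|x|^{\ell-\beta}$, whereas you simply note that $G\ast\psi$ is smooth (convolution of a tempered distribution with a Schwartz function), hence bounded near $0$, and the homogeneity prefactor $\lambda^{\ell-N}\to 0$ since $\ell<N$. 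One small point worth tightening in your tail estimate for $I_1$: you bound only the $G(x-v)$ contribution, while the residual $-G(x)\int_{|w|>\lambda|x|/2}\psi\,dw$ also needs a word (it vanishes because $\psi\in L^1$); this is trivial but should be said.
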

Therefore, assuming  Claim \ref{claim1}, it follows from Fatou's lemma that 
\begin{equation}\label{enrosco}
\left(\int_{\R^{N}}| G(x)f|^{q}|x|^{-N+(N-\ell)q}\,\,dx\right)^{1/q} \lesssim 1  
\end{equation}
by letting $\lambda \rightarrow \infty$.

\begin{claim}\label{claim2}
$G(x)f=0$ for $x\neq 0$.
\end{claim}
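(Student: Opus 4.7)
The plan is a scaling/homogeneity argument applied to the inequality \eqref{enrosco}. Recall that $G$ is homogeneous of degree $\ell-N$ and that its restriction to $S^{N-1}$ is smooth (this was noted right after the statement of Lemma \ref{lemmafrac}). Consequently the integrand on the left-hand side of \eqref{enrosco},
\[
|G(x)f|^{q}\,|x|^{-N+(N-\ell)q},
\]
is homogeneous of degree $q(\ell-N)+(-N+(N-\ell)q)=-N$ on $\R^N\setminus\{0\}$.

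First I would pass to polar coordinates $x=r\omega$, $r>0$, $\omega\in S^{N-1}$, which turns the integral in \eqref{enrosco} into
\[
\int_{0}^{\infty}\frac{dr}{r}\cdot\int_{S^{N-1}}|G(\omega)f|^{q}\,d\sigma(\omega),
\]
because the factor $r^{N-1}$ from the Jacobian combines with the homogeneity factor to give exactly $r^{-1}$. The radial integral diverges, so the only way the whole expression can be finite (as ensured by \eqref{enrosco}) is to have
\[
\int_{S^{N-1}}|G(\omega)f|^{q}\,d\sigma(\omega)=0.
\]
Since $\omega\mapsto G(\omega)f$ is continuous on $S^{N-1}$ (smoothness of $G$ on the sphere), this forces $G(\omega)f=0$ for every $\omega\in S^{N-1}$, and then homogeneity gives $G(x)f=0$ for all $x\neq0$, proving Claim \ref{claim2}.

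There is no real obstacle here once one identifies the scaling: the whole point of the exponent $-N+(N-\ell)q$ in the weight was precisely to make the integrand homogeneous of degree $-N$, so the finiteness of the integral against the logarithmically divergent radial measure immediately obstructs any nontrivial profile on the sphere. After Claim \ref{claim2}, the argument concludes quickly (outside the statement we are asked to prove): combining $G(x)f=0$ with identity \eqref{interc} yields $f\in\bigcap_{\xi\neq0}\ker A^{*}(\xi)$, and since $f=A(\xi)e_{\xi}$ we get $|f|^{2}=\langle A(\xi)e_{\xi},f\rangle=\langle e_{\xi},A^{*}(\xi)f\rangle=0$, so $f=0$ and the cancellation condition \eqref{cancel1} is established.
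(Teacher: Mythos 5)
Your argument is exactly the paper's: the weight $|x|^{-N+(N-\ell)q}$ is chosen so that the integrand in \eqref{enrosco} is homogeneous of degree $-N$, the radial factor $\int_0^\infty dr/r$ diverges, and finiteness then forces the spherical integral of $|G(\omega)f|^q$ to vanish, whence $G(x)f\equiv 0$ by continuity and homogeneity. Correct, and same approach as the paper.
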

Hence, taking Claim \ref{claim2} for granted and invoking  \eqref{interc} we conclude that $A^{\ast}(\xi)f=0$ for every $\xi \neq 0$. Since 
$f \in \bigcap_{\xi \in \R^{N}\backslash \left\{ 0 \right\}}A(\xi)[E]$ and $A$ is elliptic it follows that $f=0$. This will prove that $A(D)$ is canceling as soon as we prove the two claims above.

The proof of Claim \ref{claim2} is simple: the fact that $G$ is homogeneous of 
degree $-N+\ell$ implies that the integrand in  \eqref{enrosco} is homogeneous of degree $-N$ and the integral cannot be finite unless
\[
\int_{S^{N-1}}|G(\omega)f|^{q}\,d\omega=0 
\]
which is equivalent to $G(\omega)f=0$ for all $\omega \in S^{N-1}$ or to 
$G(x)f=0$ for $x\neq 0$.

To prove  Claim 1 we may adapt the arguments in \cite[Proposition 3.1]{BVS} as we sketch below. Write $G \ast p_{\lambda}(x)-G(x)=J_{1}(\lambda,x)-J_{2}(\lambda,x)$ where
\begin{align*}
J_{1}(\lambda,x)&=\int_{\R^{N}}\left( G(x-y)-G(x) \right)\lambda^{N}\psi(\lambda y)\,dy,\\
J_{2}(\lambda,x)&=\int_{\R^{N}} G(x-y) \lambda^{-N}\psi(\lambda^{-1}y)\,dy.
\end{align*}
Consider first $J_{1}(\lambda,x)$.  
Taking account of the decay of $\psi$ and choosing $N<\alpha<N+1$ we have 
\begin{align*}
|J_{1}(\lambda,x)| & \leq \sup_{y \in \R^{N}} \left\{ {|\lambda y|^{\alpha} |\psi(\lambda y)|} \right\}\,\, \int_{\R^{N}} |G(x-y)-G(x)|\frac{\lambda^{N}}{\lambda^{\alpha}|y|^{\alpha}}\,dy \\
& \lesssim \lambda^{N-\alpha} \int_{\R^{N}} |G(x-y)-G(x)|\frac{1}{|y|^{\alpha}}\,dy.
\end{align*}
To majorize the right hand side we observe that, since  $G(x)$ is homogeneous of degree $-N+\ell$ and smooth off the origin, we have the estimates
\begin{align*}
|G(x-y)-G(x)|&\lesssim \frac{|y|}{|x|^{N-\ell+1}},\quad |y|<|x|/2,\\ 
|G(x-y)-G(x)|&\lesssim \frac{1}{|x-y|^{N-\ell}}+\frac{1}{|x|^{N-\ell}},\quad
|y|\ge |x|/2.
\end{align*}
Hence
\begin{align*}
\int_{B_{|x|/2}} |G(x-y)-G(x)|\,\frac{1}{|y|^{\alpha}}\,dy &\lesssim \frac{1 } {|x|^{N-\ell-1}} \int_{B_{|x|/2}} \frac{1}{|y|^{\alpha-1}}\,dy\\
&\lesssim \frac{1} {|x|^{\alpha-\ell}} 
\end{align*}
and 
\begin{align*} 
\int_{\R^{N} \backslash B_{|x|/2}} |G(x-y)-G(x)|\,\frac{1}{|y|^{\alpha}}\,dy
&\lesssim \int_{\R^{N} \backslash B_{|x|/2}}\frac{1}{|x-y|^{N-\ell}}
\frac{1}{|y|^{\alpha}}\,dy\\
&\qquad+\frac{1}{|x|^{N-\ell}}\int_{\R^{N} \backslash B_{|x|/2}}  \frac{1}{|y|^{\alpha}}\,dy\\
& \lesssim \frac{1}{|x|^{\alpha-\ell}},
\end{align*}
since $\ell <N<\alpha$. Thus
\begin{align*}\nonumber
|J_{1}(\lambda,x)|  & \lesssim \lambda^{N-\alpha} \frac{1}{|x|^{\alpha-\ell}}.
\end{align*}
To handle $J_2$ we choose $\ell<\beta<N$ and get
\begin{align*}
|J_{2}(\lambda,x)| \lesssim \lambda^{\beta-N} \int_{\R^{N}} \frac{1}{|x-y|^{N-\ell}} \frac{1}{|y|^{\beta}}  dy \leq  \lambda^{\beta-N}  \frac{1}{|x|^{\beta - \ell}}.
\end{align*}
We conclude that 
\[
|\left(G \ast p_{\lambda}\right)(x)-G(x)| \leq |J_{1}(\lambda,x)|+|J_{2}(\lambda,x)| \rightarrow 0,\quad x \neq 0,
\]
as $\lambda \rightarrow \infty$.
\Qed

\subsection{The ellipticity is necessary when 
$\mathbf{q}\boldsymbol{>1}$\ 
and $\boldsymbol{0<\ell\le1}$}  
It is known (\textit{cf.} \cite[Theorem 1.4]{BVS}) that if
 \eqref{eq1} holds for  $\ell=1$ and some $1<q<N/(N-1)$ then $A(D)$ must be elliptic. Similarly we have
\begin{theorem}\label{thm2.1}
Let $A(D)$ be an elliptic homogeneous linear homogeneous differential operator of order $\nu$ on $\R^{N}$, $N\ge2$, from $E$ to $F$ and let $0<\ell\le1$, $1<q<\frac{N}{N-1}$. The estimate
\begin{equation}\label{2.7}
\left(\int_{\R^{N}}| (-\Delta)^{(\nu-\ell)/2}u(x)|^{q}|x|^{-N+(N-\ell)q}\,dx\right)^{1/q}\leq C \int_{\R^{N}}|A(D)u(x)|\,dx,
\end{equation} 
holds for every $u \in C_{c}^{\infty}(\R^{N};E)$ and some $C>0$ if and only if $A(D)$ is elliptic and canceling.
\end{theorem}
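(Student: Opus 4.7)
The plan is straightforward: the sufficiency is precisely Theorem A (the hypotheses $0<\ell\le1\le\nu$ and $1<q<N/(N-1)\le N/(N-\ell)$ lie inside the admissible range there), so I focus on the necessity. Assuming \eqref{2.7} holds, I first observe that the construction in Subsection \ref{subsection2.2} uses only the inequality for some $1\le q<N/(N-\ell)$, and consequently it already forces $A(D)$ to be canceling; hence it suffices to prove that $A(D)$ must also be elliptic, and I split this into the cases $\ell=1$ and $0<\ell<1$.

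For the endpoint $\ell=1$, I would verify that the power weight $w(x)=|x|^{-N+(N-1)q}$ lies in the Muckenhoupt class $A_q(\R^N)$ for every $1<q<N/(N-1)$ (indeed $-N<-N+(N-1)q<N(q-1)$ in this range). Consequently the Riesz multipliers $R^\alpha$ with symbols $\xi^\alpha/|\xi|^{|\alpha|}$ are bounded on $L^q(\R^N,w)$ by standard Calder\'on--Zygmund theory, and since $D^\alpha=R^\alpha(-\Delta)^{|\alpha|/2}$, summing over $|\alpha|=\nu-1$ and invoking \eqref{2.7} yields
\[
\|D^{\nu-1}u\|_{L^q(w)}\le C_1\|(-\Delta)^{(\nu-1)/2}u\|_{L^q(w)}\le C_2\|A(D)u\|_{L^1},
\]
which is exactly \eqref{eq1} with $\ell=1$. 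Ellipticity of $A(D)$ then follows from \cite[Theorem 1.4]{BVS}.

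For $0<\ell<1$, I would argue by contradiction via a high-frequency test function. Suppose $A(D)$ is not elliptic, so that $A(\xi_0)e=0$ for some unit vector $\xi_0\in\R^N$ and some $e\in E\setminus\{0\}$. Fix $\phi\in C_c^\infty(\R^N)$ with $\phi(0)\ne0$ and set $u_\lambda(x)=\phi(x)e^{i\lambda\xi_0\cdot x}e$ for $\lambda\ge 1$. Expanding $A(D)$ via Leibniz and using the homogeneity relation $A^{(\alpha)}(\lambda\xi_0)=\lambda^{\nu-|\alpha|}A^{(\alpha)}(\xi_0)$ together with $A(\xi_0)e=0$, the leading ($\alpha=0$) term vanishes and one obtains $\|A(D)u_\lambda\|_{L^1}\le C\lambda^{\nu-1}$. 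For the left-hand side, the Fourier representation
\[
(-\Delta)^{(\nu-\ell)/2}u_\lambda(x)=\lambda^{\nu-\ell}\,e^{i\lambda\xi_0\cdot x}\,e\int_{\R^N}e^{ix\cdot\eta}|\xi_0+\eta/\lambda|^{\nu-\ell}\widehat\phi(\eta)\,d\eta,
\]
combined with dominated convergence (the integrand is controlled uniformly in $\lambda\ge1$ by the Schwartz bound $(1+|\eta|)^{\nu-\ell}|\widehat\phi(\eta)|$), gives
\[
\lambda^{-(\nu-\ell)}\,|(-\Delta)^{(\nu-\ell)/2}u_\lambda(x)|\longrightarrow|\phi(x)|\,|e|\qquad\text{pointwise as }\lambda\to\infty.
\]
Fatou's lemma then produces a lower bound $c_0\lambda^{\nu-\ell}$ for the left-hand side of \eqref{2.7}, and combining with the upper bound on $\|A(D)u_\lambda\|_{L^1}$ forces $c_0\lambda^{1-\ell}\le C'$, which is absurd as $\lambda\to\infty$ since $\ell<1$.

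The main obstacle is the boundary case $\ell=1$, where both sides of \eqref{2.7} scale as $\lambda^{\nu-1}$ under the family $u_\lambda$, so the frequency-concentration argument becomes scale-invariant and on its own yields no contradiction. The observation that rescues this endpoint is the $A_q$ character of the weight $|x|^{-N+(N-1)q}$, which lets one swap the fractional Laplacian for the full gradient $D^{\nu-1}$ in the weighted $L^q$ norm and thereby reduce the endpoint case to the classical \cite[Theorem 1.4]{BVS}.
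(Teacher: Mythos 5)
Your sufficiency and cancellation steps match the paper: sufficiency is Theorem~A, and the necessity of cancellation is exactly the argument of Subsection~\ref{subsection2.2}. For the necessity of ellipticity, however, your route is genuinely different from the one in the paper. The paper runs a \emph{single} argument covering all $0<\ell\le 1$: it takes an anisotropic space-compression $u_\eps(x)=\phi(x_1/\eps)\psi(x')\,e$ (with $a_{(\nu,0,\dots,0)}e=0$), computes that the $L^1$ side is $O(\eps^{-\nu+2})$ while the left side scales like $\eps^{(\ell-\nu)+1/q}$ against a profile $w_\eps\to w_0\not\equiv 0$, and deduces a contradiction from the positive exponent $q(2-\ell)-1>0$, which needs exactly $q>1$ at the endpoint $\ell=1$. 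You instead modulate, $u_\lambda(x)=\phi(x)e^{i\lambda\xi_0\cdot x}e$, which gives the cleaner scaling $\lambda^{\nu-\ell}$ vs.\ $\lambda^{\nu-1}$ and does not even require $q>1$ when $\ell<1$ --- but, as you correctly diagnose, this degenerates at $\ell=1$, and you repair it by passing to the $A_q$-weighted boundedness of the degree-zero Riesz multipliers and then invoking \cite[Theorem 1.4]{BVS}. What the paper's approach buys is self-containedness and uniformity across $\ell$; what yours buys is that the $\ell<1$ case is essentially trivial and the endpoint $\ell=1$ is revealed as precisely the place where the known result from \cite{BVS} is needed, which is also the hidden reason for the $q>1$ restriction.

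Two small points to tighten. First, the chain ``$1<q<N/(N-1)\le N/(N-\ell)$'' is reversed: for $\ell<1$ one has $N/(N-\ell)<N/(N-1)$, so for $q$ near the stated upper endpoint and $\ell<1$ Theorem~A (which requires $q<N/(N-\ell)$) does not apply; the sufficiency claim is only safe on $1<q<N/(N-\ell)$, and the statement of Theorem~\ref{thm2.1} should be read with that range in mind. Second, the necessity of cancellation in Subsection~\ref{subsection2.2} already uses ellipticity (it needs $(A^*\!\circ A)^{-1}$ to build $G$, and uses ellipticity at the very last step), so logically you must establish ellipticity \emph{before} invoking that subsection, rather than invoking cancellation first and ellipticity afterward; since you do prove both, the conclusion stands, but the order of the two appeals should be swapped.
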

\begin{proof}
The ``if" part follows from Theorem A and the necessity of the cancellation condition was proved in the previous subsection so it is enough to prove that $A(D)$ is elliptic if \eqref{2.7} holds.

\end{proof}
Assume $A(D)$ is not elliptic. There exists $\xi\in\R^N\setminus\{0\}$ and $e\in E$ such that  $A(\xi)e=0$ and we may assume that $\xi=(1,0,\dots,0)$, $e=(1,0,\dots,0)$ (here we identify $E$ with $\R^d$, $d$= dimension of $E$). Hence, writing
$A(D)=\sum_{|\alpha|=\nu} a_\alpha\, D^\alpha$ 
it follows that $a_{\alpha_0} e=0$ for $\alpha_0=(\nu,0,\dots,0)$. In order to violate \eqref{2.7} we will try functions of the form
\[
u_\eps(x)\,e=\phi(x_1/\eps)\psi(x')\,e,
\quad \phi\in\ccinf(\R),\, \psi\in\ccinf(\R^{N-1}),\, 0<\eps<1,
\]
with the notation $x'=(x_2,\dots,x_N)$. By the choice of $u_\eps$, 
$A(D)u_\eps$ does not contain derivatives of $u_\eps$ with respect to $x_1$ 
of order $>\nu-1$.
Writing $A(D)u_\eps(x)=v_\eps(x_1,x')$ we see that the right hand side of \eqref{2.7}
satisfies	
\begin{equation}\label{2.8}
\int_{\R^{N}}|A(D)u(x)|dx=\int_{\R^{N}}|v_\eps(\eps x_1,x')|\, \eps \,dx=O(\eps^{-\nu+2}),
\end{equation} 
for $0<\eps<1$. Let us look at the left hand side of \eqref{2.7} that involves the expression
\begin{align}
(-&\Delta)^{(\nu-\ell)/2}u_\eps(x)=\frac{1}{(2\pi)^{N}} \int_{\R^N}e^{ix\cdot\xi}\,|\xi|^{\nu-\ell}\,
\eps\widehat\phi(\eps\xi_1)\,\widehat\psi(\xi')\,d\xi_{1}d\xi'\notag\\
&\notag=\frac{1}{(2\pi)^{N}} \int_{\R^N}e^{i(x_1/\eps)\xi_1}\,e^{ix'\cdot\xi'}\,
((\xi_1/\eps)^2+|\xi'|^2 )^{(\nu-\ell)/2}\, \widehat\phi(\xi_1)\,\widehat\psi(\xi')\,d\xi_{1}d\xi'\\
&=\frac{\eps^{\ell-\nu}}{(2\pi)^{N}} \int_{\R^N}e^{ix_1\xi_1/\eps}\,e^{ix'\cdot\xi'}\,
((\xi_1)^2+|\eps\xi'|^2 )^{(\nu-\ell)/2}\, \widehat\phi(\xi_1)\,\widehat\psi(\xi')\,d\xi_{1}d\xi'
           \notag\\
&\doteq \eps^{\ell-\nu}w_\eps(x_1/\eps,x').\label{2.9}
\end{align}
Note that $\widehat w_\eps(\xi)=\big((\xi_1)^2+|\eps\xi'|^2 \big)^{(\nu-\ell)/2}
\, \widehat\phi(\xi_1)\,\widehat\psi(\xi')$ 
converges in $L^1(\R^N)$ to 
\[
 \widehat w_0(\xi)=
|\xi_1|^{\nu-\ell}\, \widehat\phi(\xi_1)\,\widehat\psi(\xi')\in L^1(\R^N)
\]
and therefore the sequence $w_\eps(x)$ converges uniformly in $\R^N$ to the continuous bounded function
$w_0(x)$. Let  $1<q<N/(N-1)$. In view of \eqref{2.9} we may write after introducing the change of variables $x\mapsto (\eps x_1,x')$
\begin{equation*}
\int_{\R^{N}}
\frac{| (-\Delta)^{(\nu-\ell)/2}u_\eps(x)|^{q}}{|x|^{N-(N-\ell)q}}\,dx=
\int_{\R^{N}}
\frac{\eps^{(\ell-\nu)q+1}\,\,| w_\eps( x_1,x')|^{q}}{\big((\eps x_1)^2+|x'|^2\big)^{ \frac{N-(N-\ell)q}{2}}  }\,dx.
\end{equation*}
Recalling \eqref{2.7} and keeping in mind \eqref{2.8} and \eqref{2.9} we see that
\[
\int_{\R^{N}}
\frac{| w_\eps( x_1,x')|^{q}}{\big((\eps x_1)^2+|x'|^2\big)^{\frac{N-(N-\ell)q}{2}}}\,dx
=O(\eps^{2q-(\ell+1)})=O(\eps^{2(q-1)}) 
\]
in particular, for any $R>0$,
\[
\int_{|x|\le R}
\frac{| w_\eps( x_1,x')|^{q}}{\big((\eps x_1)^2+|x'|^2\big)^{\frac{-N+(N-\ell)q}{2}}}\,dx
=O(\eps^{2(q-1)}).
\]
Letting $\eps\searrow0$ we conclude that $w_0(x)\equiv0$ which is false whenever
$\phi,\psi\not\equiv0$.
\Qed

\subsection{Examples and comments}\label{sec 4}

We start this section by pointing out that Theorem A implies estimate \eqref{eq1} when $\ell \in \left\{ 1,...,\min(\nu,N-1) \right\}$ and 
$1<q<N/(N-\ell)$. A sketch of the proof goes as follows. Formally, 
$D^{\nu-\ell}=T\circ (-\Delta)^{(\nu-\ell)/2}$ where $T=(T_\alpha)_{|\alpha|=N-\ell}$ is the multiplier operator given by
\[
\widehat{T_\alpha f}(\xi)=\frac{\xi^\alpha}{|\xi|^{\nu-\ell}}\widehat{f}(\xi)
\]
for appropriate $f$. Then, $Tf=K*f$, $f\in\S(\erre^{N})$, is a singular integral operator and its kernel satisfies (we refer to \cite[Chapter III]{S1} on this subject)
\begin{enumerate} 
\item  $\|Tf\|_{L^{2}}\leq C \|f\|_{L^{2}}$,  $f \in \S(\erre^{N})$;
\item   $\left| \partial^{\alpha}_{x}K(x) \right| \leq A_{\alpha} |x|^{-n-|\alpha|}$, for $x \neq 0$ and $\alpha\in\ze_+^N$.
\end{enumerate}  
We  shall exploit a well known result on the continuity of singular integrals in weighted spaces $L^p(\omega(x)dx)$, where $\omega(x)$ is a weight in the Muckenhoupt class $A_p$, $1<p<\infty$, (see \cite[Chapter V]{S2} for the definition). Namely,  (\textit{cf.} \cite[p.205]{S2})
\begin{theorem}\label{teo4.1}
Let $T$ be a singular integral operator satisfying {\rm(1)} and {\rm(2)} and let $\omega \in A_{p}$ for some 
$1<p<\infty$. There exists $C>0$ such that
\begin{equation}\nonumber
\int_{\erre^N}|Tf(x)|^{p}\omega(x)dx \leq C \int_{\erre^N}|f(x)|^{p}\omega(x)dx, \quad  f \in \S(\erre^{N}).
\end{equation}
\end{theorem}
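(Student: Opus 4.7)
The plan is to reduce the weighted norm inequality for $T$ to the corresponding one for the Hardy–Littlewood maximal operator $M$, for which the characterization by the $A_p$ class is classical. Since (1) supplies the $L^2$ bound and (2) is the standard Calderón–Zygmund kernel condition, one first recovers the unweighted theory: the CZ decomposition of $f$ at height $\lambda$ together with (2) shows that $T$ is of weak type $(1,1)$, and Marcinkiewicz interpolation with (1) (plus duality) yields the unweighted bound $\|Tf\|_{L^p}\le C_p\|f\|_{L^p}$ for every $1<p<\infty$.

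The heart of the proof is the Coifman–Fefferman good-$\lambda$ inequality: for every $\omega\in A_\infty$ there exist $\gamma_0>0$, $\delta>0$ and $C>0$ such that, for all $0<\gamma<\gamma_0$ and all $\lambda>0$,
\[
\omega\bigl(\{x:|Tf(x)|>2\lambda,\ Mf(x)\le\gamma\lambda\}\bigr)\le C\gamma^{\delta}\,\omega\bigl(\{x:|Tf(x)|>\lambda\}\bigr).
\]
To prove it, perform a Whitney decomposition of the open set $\{|Tf|>\lambda\}$ into cubes $\{Q_j\}$, split $f=f'+f''$ using $f''=f\mathbf{1}_{2Q_j^*}$ on each piece, exploit the unweighted weak $(1,1)$ estimate for the bad part and the kernel smoothness (2) to control the good part on a large fraction (in Lebesgue measure) of $Q_j$, and finally translate the Lebesgue estimate into an $\omega$-estimate using the defining property of $A_\infty$ that $|E|/|Q|$ small implies $\omega(E)/\omega(Q)$ small uniformly.

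Multiplying the good-$\lambda$ inequality by $p\lambda^{p-1}$ and integrating in $\lambda$ (choosing $\gamma$ small so that $C\gamma^\delta\cdot 2^p<1$) yields
\[
\int_{\R^N}|Tf|^{p}\,\omega\,dx\le C\int_{\R^N}(Mf)^{p}\,\omega\,dx.
\]
The proof is then closed by invoking Muckenhoupt's theorem, which characterizes the weights for which $M$ is bounded on $L^p(\omega\,dx)$ as precisely the class $A_p$. The main obstacle is the good-$\lambda$ inequality itself, since it is the only place where the interaction between the singular kernel and the weight is genuinely delicate; once it is available, the remaining steps are purely formal.
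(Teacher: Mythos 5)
Your proposal is correct and is exactly the classical Coifman--Fefferman argument (good-$\lambda$ inequality reducing $T$ to the maximal operator $M$, followed by Muckenhoupt's theorem). The paper does not reprove this result; it cites it directly from Stein, \emph{Harmonic Analysis}, Chapter V, p.~205, where the proof given is the same one you outline.
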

Since the function $|x|^a$ belongs to $A_p$, $p>1$ when $-n<a<n(p-1)$ 
(\textit{cf.} \cite[p.218]{S2}) it follows that
\begin{equation}\nonumber
\int_{\erre^N}\frac{|Tf(x)|^{p}}{|x|^{N-(N-\ell)q}}\,dx 
\leq C \int_{\erre^N}\frac{|f(x)|^{p}}{|x|^{N-(N-\ell)q}}\,dx, \quad  f \in \S(\erre^{N}),
\end{equation}
which formally gives for $f=(-\Delta)^{(\nu-\ell)/2}u$
\begin{equation}\nonumber
\int_{\erre^N}\frac{|D^{\nu-\ell}u(x)|^{p}}{|x|^{N-(N-\ell)q}}\,dx 
\leq C \int_{\erre^N}\frac{|(-\Delta)^{(\nu-\ell)/2}u(x)|^{p}}{|x|^{N-(N-\ell)q}}\,dx, \quad  f \in \S(\erre^{N}).
\end{equation}
This formal argument may be rigorously justified to show that Theorem A implies Theorem 1.1 in \cite{BVS} for $\ell \in \left\{ 1,...,\min(\nu,N-1)\right\}$ and 
$1<q<N/(N-\ell)$.

{Next we list particular cases of Theorem A for some specific elliptic and canceling homogeneous linear differential operators generalizing  estimates that had been previously considered for integral values of $\ell$  in  \cite{BVS}, \cite{HP3} and \cite{VS}.
\vglue 0.2cm
\noindent \textbf{Hodge-de Rham complex.} Let $du$ and $d^{*}u$ the exterior derivative and co-exterior derivative of $u \in C_{c}^{\infty}(\R^{N}; \Lambda^{k}\R^{N})$ the set of smooth  k-forms with compact support.  Assume k is neither 1 nor $N-1$ then for $0<\ell\leq 1$ we have  
\begin{equation}\nonumber
\int_{\R^{N}} \frac{| (-\Delta)^{(1-\ell)/2}u(x)|}{|x|^{\ell}}\,dx \leq C (\|du\|_{L^{1}} + \|d^{*}u\|_{L^{1}} ), 
\end{equation}
for all $u \in C_{c}^{\infty}(\R^{N}; \Lambda^{k}\R^{N})$. The estimate remains valid for $k=1$ if $d^{*}u=0$ and for  $k=N-1$ if $du=0$.\\
%
\noindent \textbf{Laplace-Beltrami operator.} Consider the operator 
\[(d^{*}d, dd^{*}):C_{c}^{\infty}(\R^{N}; \Lambda^{k}\R^{N}) \rightarrow C_{c}^{\infty}(\R^{N}; \Lambda^{k}\R^{N}) \times C_{c}^{\infty}(\R^{N};\Lambda^{k}\R^{N}).
\] 
Assume $k\in \left\{1,...,N-1 \right\}$ then for $0<\ell\leq 2$ when $N>2$ or  $0<\ell < 2$ when $N=2$ we have
\begin{equation}\nonumber
\int_{\R^{N}} \frac{| (-\Delta)^{(2-\ell)/2}u(x)|}{|x|^{\ell}}\,dx \leq C (\|dd^{*}u\|_{L^{1}} + \|d^{*}du\|_{L^{1}} ), 
\end{equation}
for all $u \in C_{c}^{\infty}(\R^{N}; \Lambda^{k}\R^{N})$.  We point out if $k=0,N$ then the operator reduces to Laplacian $\Delta$ that is elliptic but not canceling. \\
\noindent \textbf{Korn-Sobolev-Strauss operator.} Consider the symmetric derivative operator 
\[
D_{s}:C_{c}^{\infty}(\R^{N};\R^{N}) \rightarrow C_{c}^{\infty}(\R^{N};\R^{N(N+1)/2})
\] 
given by $D_{s}u(x):=f(x)$ with 
\[
f_{j,k}(x):={\frac{\partial_{x_j}u_{k}(x)+\partial_{x_k}u_{j}(x)}{2}},
\quad 1\leq j\leq k \leq N.
\]
For $0<\ell\leq 1$ we have
\begin{equation}\nonumber
\int_{\R^{N}} \frac{| (-\Delta)^{(1-\ell)/2}u(x)|}{|x|^{\ell}}\,dx \leq C \| D_{s}u\|_{L^{1}},  \quad  u \in C_{c}^{\infty}(\R^{N}; \R^{N}).
\end{equation}
%
\noindent  \textbf{Maz'ya inequality.}  Consider the operator 
\[
(\Delta, \nabla \text{div}):C_{c}^{\infty}(\R^{N};\R^{N}) \rightarrow C_{c}^{\infty}(\R^{N};\R^{N}) \times C_{c}^{\infty}(\R^{N};\R^{N}). 
\]
Then, for $0<\ell\leq 2$ when $N>2$ or  $0<\ell < 2$ 
when $N=2$, we have
\begin{equation}\nonumber
\int_{\R^{N}} \frac{| (-\Delta)^{(2-\ell)/2}u(x)|}{|x|^{\ell}}\,dx \leq C (\|\Delta u\|_{L^{1}} + \|\nabla \text{div}\,u\|_{L^{1}} ), 
\end{equation}
for all $u \in C_{c}^{\infty}(\R^{N}; \Lambda^{k}\R^{N})$.}

\section{Endpoint case: $\nu=N$}\label{secB}

This section is devoted to the proof of Theorem B. One of the ingredients is a Hardy type inequality for weighted Lebesgue spaces that we state in a general setting.

\begin{lemma}\label{l2.1}
Let $N\geq 2$. If $\nu>N$ and $u\in C_{c}^{\infty}(\R^N)$ satisfies
$D^{\alpha}u(0)=0$ for $|\alpha|\le\nu-N$ then
\begin{equation}\label{2.2}
\int_{\R^{N}}\frac{|u(x)|}{|x|^\nu}
\,dx\lesssim \int_{\R^{N}} \frac{|\nabla u(x)|}{|x|^{\nu-1}}\,dx.
\end{equation} 
If $\nu=N$ then
\begin{equation}\label{2.6}
\int_{\R^{N}}\frac{|u(x)|}{|x|^N}
\,dx
\lesssim \int_{\R^{N}}|\nabla u(x)|\frac{\big|\log |x|\big|}{|x|^{N-1}}\,dx, \  u \in C_{c}^{\infty}(\R^N),
\end{equation} 
where it is assumed that $u(0)=0$. 
\end{lemma}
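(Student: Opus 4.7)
The plan is to pass to polar coordinates and reduce both assertions to one-dimensional weighted Hardy inequalities along each ray from the origin. Writing $x = r\omega$ with $r = |x|$ and $\omega \in S^{N-1}$, set $f_\omega(r) := u(r\omega)$; then $|f_\omega'(r)| = |\omega \cdot \nabla u(r\omega)| \le |\nabla u(r\omega)|$. Both sides of \eqref{2.2} and \eqref{2.6} factor through integration against $r^{N-1}\,d\omega$, so it suffices to establish for each fixed $\omega$ a 1D estimate for $f_\omega$ with the correct radial weight and then integrate in $\omega$.

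For the sub-critical case $\nu > N$, the relevant exponent is $\alpha := N - 1 - \nu < -1$. The hypothesis $D^\beta u(0)=0$ for $|\beta| \le \nu - N$ forces $f_\omega^{(j)}(0) = 0$ for $j \le \nu - N$, so $|f_\omega(r)| \lesssim r^{\nu - N + 1}$ and $|f_\omega'(r)| \lesssim r^{\nu - N}$ near the origin, guaranteeing integrability of both sides. From $f_\omega(r) = \int_0^r f_\omega'(s)\, ds$ and Fubini--Tonelli applied to a non-negative integrand,
\[
\int_0^\infty |f_\omega(r)|\, r^{\alpha}\, dr
\le \int_0^\infty |f_\omega'(s)| \int_s^\infty r^{\alpha}\, dr\, ds
= \frac{1}{\nu - N}\int_0^\infty |f_\omega'(s)|\, s^{\alpha+1}\, ds.
\]
Integrating in $\omega$ and reverting to Cartesian coordinates gives \eqref{2.2}.

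For the endpoint $\nu = N$ the exponent is the critical $\alpha = -1$ and the previous swap diverges since $\int_s^\infty r^{-1}\,dr=\infty$. The device is to split the radial integral at $r = 1$ and use different representations of $f_\omega$ on each piece, exploiting both boundary conditions: on $(0,1)$ estimate $|f_\omega(r)|\le \int_0^r |f_\omega'(s)|\,ds$ via $f_\omega(0)=u(0)=0$, and on $(1,\infty)$ estimate $|f_\omega(r)|\le \int_r^\infty |f_\omega'(s)|\,ds$ using the compact support of $u$. Fubini on each piece yields the factors $\int_s^1 dr/r = -\log s$ on $(0,1)$ and $\int_1^s dr/r = \log s$ on $(1,\infty)$, which together assemble into the weight $|\log s|$, giving
\[
\int_0^\infty \frac{|f_\omega(r)|}{r}\, dr \le \int_0^\infty |f_\omega'(s)|\, |\log s|\, ds,
\]
whence integration in $\omega$ yields \eqref{2.6}. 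The main obstacle is precisely this endpoint: a single one-sided representation of $f_\omega$ would produce the divergent weight $\int_s^\infty dr/r$, and the symmetric splitting at a fixed radius is exactly what converts this divergence into the integrable logarithm $|\log|x||$ that appears on the right-hand side of \eqref{2.6}.
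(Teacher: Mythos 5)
Your proof is correct, and the underlying strategy --- pass to polar coordinates, reduce to a one-dimensional weighted Hardy inequality on each ray, and integrate over the sphere --- is the same as the paper's. The paper integrates by parts with the primitive $r^{N-\nu}/(N-\nu)$ for $\nu>N$ and (in effect) $\log r$ for $\nu=N$, while you use the fundamental theorem of calculus together with Tonelli; these are two equivalent ways of running the same one-dimensional estimate, and in the subcritical case they give the same constant $1/(\nu-N)$. Where your write-up is genuinely cleaner is at the endpoint $\nu=N$: the paper writes the radial weight as $\partial_r|\log r|$ and then integrates by parts, but $\partial_r|\log r|=\mathrm{sign}(r-1)/r$, not $1/r$, so the first equality in that display is not literally correct; the intended primitive is $\log r$, which changes sign at $r=1$, and the boundary contributions at $0$ and $\infty$ must be killed by $u(0)=0$ and compact support respectively. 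Your device of splitting the radial integral at $r=1$, representing $f_\omega(r)=\int_0^r f_\omega'(s)\,ds$ on $(0,1)$ and $f_\omega(r)=-\int_r^\infty f_\omega'(s)\,ds$ on $(1,\infty)$, and only then applying Tonelli makes all of this completely explicit: the two pieces produce $-\log s$ and $\log s$, which assemble into the weight $|\log s|$ appearing in \eqref{2.6}. This is exactly the right way to make the paper's endpoint argument rigorous.
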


\begin{proof}
Note that the vanishing hypotheses on $u$ grant the integrability of both sides of \eqref{l2.1}  so, by a density argument, it is enough to prove the estimate assuming 
that $D^{k}u$ vanishes on a neighborhood of the origin.
Denote by $S=S^{N-1}$ the unit sphere in $\R^N$, use polar coordinates and integrate by parts for $\nu>N$ to get
\begin{align*}
\int_{\R^{N}}\frac{|u(x)|}{|x|^\nu}\,\,dx&=\frac{1}{N-\nu}
  \int_{S}\left(\int_0^\infty |u(r\theta)| \partial_{r}
                       \left(r^{N-\nu}\right)\,dr\right)d\theta\\
&=-\frac{1}{N-\nu}
  \int_{S}\left(\int_0^\infty \partial_{r}(|u(r\theta)|) \,r^{N-\nu}\,dr\right)d\theta
\end{align*}
so
\begin{align*}
\int_{S} d\theta\int_0^\infty 
          &|u|\,r^{N-1-\nu}\,dr\\
&=-\int_{S} d\theta\int_0^\infty 
          (\partial_r|u|)\,r^{N-\nu}+|u|\,(N-1-\nu)r^{N-1-\nu}\, dr.
\end{align*}
Rearranging terms in the last identity and using $|(\partial_r|u|)|\le|\nabla u|$ we derive  
\[
(\nu-N)\int_{\R^{N}}\frac{|u(x)|}{|x|^\nu}\,dx
               \le \int_{\R^{N}}\frac{|\nabla u(x)|}{|x|^{\nu-1}}\,dx
\]
as we wished to prove. 
The proof of estimate \eqref{2.6} follows the same steps keeping in mind that
\begin{align*}
\int_{\R^{N}}\frac{|u(x)|}{|x|^N}\,\,dx&=
  \int_{S}\left(\int_0^\infty |u(r\theta)| \partial_{r}
                       \left| \ln{r}\right|\,dr\right)d\theta\\
&=-
  \int_{S}\left(\int_0^\infty \partial_{r}(|u(r\theta)|) \,|\ln{r}|\,dr\right)d\theta \\
  &\leq  \int_{\R^{N}}|\nabla u(x)|\frac{\big|\log |x|\big|}{|x|^{N-1}}\,dx.
\end{align*}     
To avoid differentiability issues with the function $|u|$ in the proof above, one may define the smooth function $N_\eps(u)=\sqrt{|u|^2+\eps}$, $\eps>0$,  replace $|u|$ by $N_\eps(u)$ in the computations and then let  $\eps\searrow0$. 
\Qed

\end{proof}

\begin{remark}\label{rem2.1}
The a priori estimate 
\begin{equation}\label{2.5}
\int_{\R^{N}}\frac{|u(x)|}{|x|^N}
\,dx\le C \int_{\R^{N}}\frac{|\nabla u(x)|}{|x|^{N-1}}\,dx,\quad u\in C_{c}^{\infty}(\R^N;E) ,
\end{equation} 
fails for any choice of $C>0$.
For instance, if we set $u_A(x)=v(r)e_1$ where $e_1=(1,0,\dots,0)\in\R^{N}$ and $v=v(r)$ is the radial scalar function defined by
\[
v(r)=
\begin{cases}
0&\text{if $0<r\le 1$,}\\
\log r &\text{if $1<r\le A$,}\\
-(\log A)\,(r-A)+\log A &\text{if $A<r\le A+1,$}\\
0&\text{if $A+1<r<\infty.$}
\end{cases}
\]
Then $v(r)$ is compactly supported, continuous and piecewise differentiable.
We have
\[
\int_{\R^N}\frac{|u|}{|x|^N}\,dx=C\int_0^\infty\frac{v(r)}{r}\,dr
           =\frac{1}{2}(\log^2A+ \log A)
\]
and
\[
\int_0^\infty |v'(r)|\,dr=\int_1^A\frac{1}{r}\,dr+\int_A^{A+1}\log A\,dr=2\log A,
\]
showing that \eqref{2.2} cannot hold for all $u_A$ as $A\to \infty$. By regularizing $u_A$ we may also violate  \eqref{2.2} with test functions.
\end{remark}

{The proof of Theorem B follows from combining \eqref{2.6} with the following estimate}

\begin{proposition}\label{prop2.2}
Let $A(D)$ be an elliptic and canceling homogeneous linear differential operator with order $\nu=N$. If $u(0)=0$ then
\begin{equation}\label{main2.2}
\int_{\R^{N}}|\nabla u(x)|\frac{\big|\log |x|\big|}{|x|^{N-1}}\,dx \leq C \int_{\R^{N}} |A(D)u(x)| (1+|\log{|x|}|) \,dx
\end{equation} 
for some $C>0$ and for all  $u \in C_{c}^{\infty}(\R^{N};E)$.
\end{proposition}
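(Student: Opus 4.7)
The plan is to adapt the strategy of Lemma \ref{lemmafrac} to a logarithmically weighted setting via a convolution representation of $\nabla u$ in terms of $A(D)u$. Since $\nu=N$, for each $j=1,\dots,N$ I would consider the Fourier multiplier $H_{j}(\xi)=i\xi_{j}(A^{\ast}\circ A)^{-1}(\xi)A^{\ast}(\xi)$, which is smooth off the origin and homogeneous of degree $1-N$. Its inverse Fourier transform $K_{j}$ is then a locally integrable tempered distribution, homogeneous of degree $-1$ and smooth on $\R^{N}\setminus\{0\}$, satisfying the pointwise bounds $|K_{j}(x)|\lesssim|x|^{-1}$ and $|\partial K_{j}(x)|\lesssim|x|^{-2}$. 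Arguing by Fourier transform exactly as for the identity \eqref{decfrac}, one obtains
\[
\partial_{j}u(x)=\int_{\R^{N}}K_{j}(x-y)\,A(D)u(y)\,dy,\quad u\in C_{c}^{\infty}(\R^{N};E).
\]

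Following Lemma \ref{lemmafrac}, I would split $K_{j}(x-y)=K_{1}(x,y)+K_{2}(x,y)$ with $K_{1}(x,y)=\psi(y/|x|)K_{j}(x)$ for a cut-off $\psi\in C_{c}^{\infty}(B_{1/2})$ equal to $1$ on $B_{1/4}$, so that the left-hand side of \eqref{main2.2} is bounded by $J_{1}+J_{2}$ with
\[
J_{i}\doteq\int_{\R^{N}}\frac{|\log|x||}{|x|^{N-1}}\Big|\int_{\R^{N}}K_{i}(x,y)A(D)u(y)\,dy\Big|\,dx,\quad i=1,2.
\]

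For $J_{1}$, applying the cancellation inequality $(\dagger)$ to the test form $y\mapsto\psi(y/|x|)f$ (with unit $f\in F$ chosen to realize the modulus as in Lemma \ref{lemmafrac}) and using $|K_{j}(x)|\lesssim|x|^{-1}$, Fubini's theorem yields
\[
J_{1}\lesssim\int_{\R^{N}}|y||A(D)u(y)|\int_{|x|\ge 2|y|}\frac{|\log|x||}{|x|^{N+1}}\,dx\,dy;
\]
the inner integral reduces to a constant multiple of $\int_{2|y|}^{\infty}|\log r|\,r^{-2}\,dr$, which by integration by parts is bounded by $C(1+|\log|y||)/|y|$ for all $|y|>0$, giving $J_{1}\lesssim\int|A(D)u(x)|(1+|\log|x||)\,dx$. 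For $J_{2}$, Minkowski's inequality reduces matters to the pointwise estimate
\[
I(y)\doteq\int_{\R^{N}}\frac{|\log|x||\,|K_{2}(x,y)|}{|x|^{N-1}}\,dx\lesssim 1+|\log|y||,
\]
valid uniformly in $y$. Using the bounds $|K_{2}(x,y)|\lesssim|y|/|x|^{2}$ for $|x|>4|y|$ and $|K_{2}(x,y)|\lesssim|x-y|^{-1}$ for $|x|\le 4|y|$ (exactly as in the proof of Lemma \ref{lemmafrac}), I would split $I(y)$ into the three regions $|x|>4|y|$, $2|y|\le|x|\le 4|y|$ and $|x|<2|y|$; elementary calculations, with the auxiliary estimate $\int_{0}^{R}|\log r|\,dr\lesssim R(1+|\log R|)$ controlling the small-$|x|$ contribution, produce the desired bound on each region.

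The main technical obstacle is the region $|x|<2|y|$, where both $|\log|x||$ and $|x|^{-(N-1)}$ are singular at the origin. It is precisely this interplay that forces the logarithmic correction $1+|\log|x||$ on the right-hand side of \eqref{main2.2}: the small-$|x|$ contribution to $J_{2}$ is genuinely of order $|\log|y||$, so replacing the right-hand side with the cleaner $\|A(D)u\|_{L^{1}}$ would fail.
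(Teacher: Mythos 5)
Your proof is correct and follows essentially the same route as the paper: representing $\nabla u$ as a convolution of $A(D)u$ with a kernel homogeneous of degree $-1$, splitting the kernel into the frozen part $K_1(x,y)=\psi(y/|x|)K(x,0)$ and the remainder $K_2$, applying the cancellation estimate $(\dagger)$ to $J_1$, and bounding $J_2$ by the same regional decomposition around $|x|\sim|y|$. The only small discrepancy is in your closing remark: the logarithmic weight on the right-hand side is not forced solely by the small-$|x|$ part of $J_2$; it appears already in $J_1$ (from $\int_{2|y|}^\infty |\log r|\,r^{-2}\,dr\sim(1+|\log|y||)/|y|$) and in the $|x|>4|y|$ region of $J_2$, so it is a feature of the whole estimate rather than of a single region.
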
 

The proof of \eqref{main2.2} that  involves $\omega(x)=\big|\log|x|\big|$ is similar to the proof of Lemma \ref{lemmafrac} and also makes use of $(\dagger)$.

\begin{proof} Reasoning as before we may write 
 \begin{equation} \nonumber 
 Du(x)=\int_{\R^{N}}K(x-y)[A(D)u](y)dy,
 \end{equation}
where $K \in C^{\infty}(\R^{N} \backslash \left\{ 0 \right\}; \mathscr{L}(E; \mathscr{L}(\R^{N}; F) ))$ homogeneous of degree $-1$ and satisfies the singular estimate
\begin{equation}\label{k1.1}
|K(x-y)|\leq c |x-y|^{-1}, \quad x\neq y,
\end{equation}
for some constant $c>0$. Following the same decomposition used in   Lemma \ref{lemmafrac} 
it is enough conclude that 
\begin{align*}
J_{j}\doteq &\left(\int_{\R^{N}}\left| \int_{\R^{N}} K_{j}(x,y)A(y,D)u(y)dy   \right||x|^{-N+1} \big|\log|x|\big|\, \,dx\right)\\
            &\lesssim \int_{\R^{N}} |A(D)u(x)| (1+\big|\log|x|\big| )\,dx, 
\end{align*}
for every $u \in C_{c}^{\infty}(\R^{N};E)$, $j=1,2$. Thanks $(\dagger)$ and homogeneity of $K$ it follows that
\begin{align*}
J_{1}&\lesssim 
\int_{\R^{N}}\left| \int_{\R^{N}} \psi\left(\frac{y}{|x|}\right)A(D)u(y)dy\right|\frac{|K(x)|}{|x|^{N-1}} \big|\log|x|\big| \,dx    \\
&=\displaystyle{ \int_{\R^{N}}\left| \int_{B_{|x|/2}}  |y|A(D)u(y)dy\right| \frac{\big|\log|x|\big|}{|x|^{N+1}}\,dx } \\
& \lesssim \int_{\R^{N}}|y||A(D)u(y)| \left(\int_{\R^{N}\backslash B(0,2|y|)} \frac{|\log |x||}{|x|^{N+1}}\,dx \right)dy \\
& \lesssim \int_{\R^{N}}|A(D)u(y)| (1+|\log |y| |) dy. 
\end{align*}
We now estimate $J_{2}$. By Minkowski's inequality
\begin{align*}
J_{2} \leq  \int_{\R^{N}} \left( \int_{\R^{N}}\frac{|K_{2}(x,y)|}{|x|^{N-1}}  \big|\log|x|\big|\,dx \right) |A(D)u(y)|dy
\end{align*}
where $\displaystyle{\int_{\R^{N}}\frac{|K_{2}(x,y)|}{|x|^{N-1}} \big|\log|x|\big| \, dx }$ is decomposed as 
\begin{align*}
\int_{B_{2|y|}}\frac{|K_{2}(x,y)|}{|x|^{N-1}} \big|\log|x|\big| dx + \int_{\R^{N} \backslash B_{2|y|}}\frac{|K_{2}(x,y)|}{|x|^{N-1}} \big|\log|x|\big| \,dx.
\end{align*}
Since $K_{2}(x,y)=K(x-y)$ for $|x|<2|y|$ then from  \eqref{k1.1} and for  $0<\delta<1$
we have
\begin{align*}
\int_{B_{2|y|}}\frac{1}{|x-y|}\frac{\big|\log|x|\big|}{|x|^{N-1}}\,dx &\leq \frac{1}{(1-\delta)|y|}\int_{|x|<\delta |y|} \frac{\big|\log|x|\big|}{|x|^{N-1}}\,dx   \\
&+ \frac{|\log(\delta|y|)|}{\delta^{N-1}|y|^{N-1}} \int_{\delta |y|<|x|<2|y|} \frac{1}{|x-y|}dy\\
& \lesssim |\log|y||.
\end{align*}
If $4|y|<|x|$ then
$$|K_{2}(x,y)|=|K(x-y)-K(x)|\leq \frac{|y|}{|x-y||x|}\lesssim \frac{|y|}{|x|^{2}}$$
that implies
\begin{align*}
\int_{ 4|y|<|x|} |K_{2}(x-y)|\frac{\big|\log|x|\big|}{|x|^{N-1}}\,dx &\lesssim |y|  \left( \int_{\R^{N} \backslash B_{4|y|}} \frac{ |\log |x||}{|x|^{N+1}}\,dx \right)   \\
& \lesssim |y| \left(\frac{1+|\log4|y||}{4|y|}\right) \\
& \lesssim (1+|\log|y||).
\end{align*}
Clearly,
\begin{align*}
\int_{ 2|y|<|x|<4|y| }|K_{2}(x-y)|\frac{\big|\log|x|\big|}{|x|^{N-1}}\,dx \lesssim |\log|y||.
\end{align*}
Thus,
\begin{align*}
J_{2} &\leq  \int_{\R^{N}} \left( \int_{\R^{N}}\frac{|K_{2}(x,y)|}{|x|^{N-1}} \big|\log|x|\big| \,dx \right) |A(D)u(y)|dy \\
& \lesssim \int_{\R^{N}} |A(D)u(y)| (1+|\log|y||) dy. \Qed
\end{align*}
\end{proof}

\section{Local Hardy-Sobolev inequalities for $A(x,D)$}\label{seclocal}

{We start by proving Theorem C.} Of course, there is no loss of generality in assuming that $0\in\Omega$  and $x_0=0$ so we will always do so from now on.
In order to obtain \eqref{maineq} for $x_0=0$, it is sufficient to prove 
\begin{equation}\nonumber
\int_{\R^{N}}\frac{|P_{\nu-\ell}\,u(x)|}{|x|^{\ell}}\,dx\leq C \int_{\R^{N}}|A_{\nu}(x,D)u|\,dx, \quad u \in C_{c}^{\infty}(B;E),
\end{equation}  
i.e., we may replace the operator $A(x,D)$ by its principal part $A_{\nu}(x,D)$. Indeed, the terms of $A(x,D)u$ that contain derivatives of $u$ of order $<\nu$ may be majorized by  an application of the local Gagliardo-Nirenberg estimate \eqref{hpmain}. In fact, we have $\|D^{k}u\|_{L^{1}}\lesssim \|D^{\nu-1}u\|_{L^{1}}$ for $ k \le\nu-1$ and then \eqref{hpmain} implies that  $\|D^{k}u\|_{L^{1}}\lesssim \|A(x,D)u\|_{L^{1}}$ for $k=0,...,\nu-1$ which implies 
\[
\Big\|\sum_{k<\nu}a_{k}(x)D^{k}u(x)\Big\|_{L^{1}}\lesssim \|A(x,D)u\|_{L^{1}}, \quad u \in C_{c}^{\infty}(B;E). 
\]
Hence, \eqref{maineq} will follow from Proposition \ref{prop4.1} below that is stated in terms of the H\"ormander class of pseudo-differential operators $S^m_{\rho,\delta}$ introduced in \cite{H1} which contains the class of classical pseudo-differential operators and is now standard. For additional reading on this subject we refer to \cite{H2} and \cite{Ta}.

\begin{proposition}\label{prop4.1}
Let $A(x,D)$ as before, $0<\ell<N$ and $\ell\le\nu$. If $A(x,D)$ is elliptic and canceling on $\Omega$, then for every $1\le q <\frac{N}{N-\ell}$ and every properly supported pseudo-differential operator
$P_{\nu-\ell}(x,D)  \in OpS_{1,\delta}^{\nu-\ell}(\Omega)$, $0\le\delta<1$,
there exists a neighborhood $\U$ of the origin  and $C>0$ such that 
\begin{equation}\label{main5}
\bigg(\int_{\R^{N}}\frac{|P_{\nu-\ell}\,u(x)|^{q}}{|x|^{N-(N-\ell)q}}\,dx\bigg)^{1/q}
\leq C \int_{\R^{N}}|A(x,D)u(x)|\,dx
\end{equation} 
holds for every  $u \in C_{c}^{\infty}(\U;E)$.
\end{proposition}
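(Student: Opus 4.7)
My plan is to use the standard freezing-perturbation scheme: reduce to a constant-coefficient Hardy--Sobolev estimate at $x_0=0$ via a parametrix, and control the resulting variable-coefficient error using pseudo-differential calculus together with the local Gagliardo--Nirenberg estimate \eqref{hpmain}. As the paragraph preceding the proposition already permits the reduction to the principal part, I may assume $A(x,D)=A_\nu(x,D)$. Set $A_0:=A_\nu(0,D)$; by hypothesis it is a constant-coefficient elliptic canceling homogeneous operator of order $\nu$, and I write $A_\nu(x,D)=A_0+B(x,D)$ where $B$ is a differential operator of order $\nu$ whose coefficients vanish at the origin.

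For the parametrix step, since $A_0$ is elliptic, the Fourier multiplier $H:=(A_0^{\ast}A_0)^{-1}A_0^{\ast}$ of order $-\nu$ (suitably cut off near $\xi=0$, up to a smoothing remainder) satisfies $HA_0 u=u$ on $\S(\R^N;E)$. Composing with $P_{\nu-\ell}(x,D)$ gives
\[
P_{\nu-\ell}(x,D)\,u=T_0(A_0 u), \qquad T_0:=P_{\nu-\ell}(x,D)\circ H,
\]
where $T_0\in\Op S^{-\ell}_{1,\delta}$, modulo a smoothing operator, and its Schwartz kernel $K_0(x,y)$ satisfies the bounds $|K_0(x,y)|\lesssim|x-y|^{\ell-N}$ and $|\partial_y K_0(x,y)|\lesssim|x-y|^{\ell-N-1}$ required by Lemma \ref{lemmafrac}. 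Since $A_0$ is constant coefficient, Lemma \ref{lemmafrac} applied to the pair $(K_0,A_0)$ yields $\|T_0(A_0 u)\|_W\le C\|A_0 u\|_{L^1}$, where $\|\cdot\|_W$ denotes the weighted $L^q$ norm on the left of \eqref{main5}. Using $A_0 u=A_\nu(x,D)u-B(x,D)u$ reduces the task to controlling the perturbation $T_0\bigl(B(x,D)u\bigr)$.

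Rather than bounding $\|B(x,D)u\|_{L^1}$ pointwise (which would require $\|D^\nu u\|_{L^1}$, not available from \eqref{hpmain}), I would analyze the composition $T_0\circ B(x,D)$ itself. Since the full symbol of $B$ vanishes identically at $x=0$, a Taylor expansion in $x$ combined with symbolic calculus yields
\[
T_0\circ B(x,D)=\sum_{j=1}^{N} x_j\,Q_j(x,D)+E(x,D),
\]
with $Q_j\in\Op S^{\nu-\ell}_{1,\delta}$ and $E\in\Op S^{\nu-\ell-1}_{1,\delta}$. For $u$ supported in $B(0,r)$ the inequality $|x_j|\le r$ gives $\|x_j Q_j u\|_W\le r\|Q_j u\|_W$, while $\|Eu\|_W$ can be majorized by $\|D^{\nu-1}u\|_{L^{N/(N-1)}}$ using Hardy-type and Sobolev embeddings on the bounded domain $\U$, so it is dominated by $\|A(x,D)u\|_{L^1}$ via \eqref{hpmain}.

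The main obstacle is closing the absorption: each $Q_j$ is itself a pseudo-differential operator of order $\nu-\ell$, so the estimate $\|Q_j u\|_W\le C\|A(x,D)u\|_{L^1}$ is of the same type we are trying to establish. The cure is to carry out the whole argument uniformly over a bounded subset of $\Op S^{\nu-\ell}_{1,\delta}$, tracking the dependence of all constants on a finite number of symbol seminorms of the operators that enter (namely $P_{\nu-\ell}$, $T_0$, and the auxiliary $Q_j$). Once uniformity is in hand, the iteration produces a Neumann-type inequality which converges when $r$ is sufficiently small, yielding \eqref{main5} on the neighborhood $\U=B(0,r)$.
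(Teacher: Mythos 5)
Your freezing scheme yields $P_{\nu-\ell}u=T_0(A_0u)+Ru$ with $R$ smoothing, and Lemma \ref{lemmafrac} legitimately gives $\|T_0(A_0u)\|_W\lesssim\|A_0u\|_{L^1}$. From here you have two options, and both break. If you substitute $A_0u=A_\nu(x,D)u-B(x,D)u$ \emph{after} applying the lemma, the quantity left to control is $\|B(x,D)u\|_{L^1}$ (an $L^1$ norm, with $T_0$ already gone), and, as you correctly note, this requires $\|D^\nu u\|_{L^1}$, which is not controlled by $\|A(x,D)u\|_{L^1}$. If you substitute \emph{at the operator level} you get $P_{\nu-\ell}u=T_0\bigl(A_\nu(x,D)u\bigr)-T_0\bigl(B(x,D)u\bigr)+Ru$; now the term $T_0(B(x,D)u)$ appears and your Taylor/absorption idea is at least in the right place, but you have created a new term $\|T_0(A_\nu(x,D)u)\|_W$ that Lemma \ref{lemmafrac} does \emph{not} estimate. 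That lemma bounds $\int K(\cdot,y)g(y)\,dy$ in the weighted norm only when the input $g$ is of the form $A(D)u$ for a \emph{constant-coefficient} elliptic canceling operator, because its proof hinges on the cancellation inequality $(\dagger)$, which is a structural property of the range of $A(D)$ and not an $L^1$ statement. With the variable-coefficient $A_\nu(y,D)u(y)$ as input, $(\dagger)$ is not available, and this is precisely the missing ingredient: one needs a variable-coefficient analogue of $(\dagger)$. The paper supplies exactly this in Proposition \ref{prop1} and, instead of freezing, uses the parametrix of the genuinely variable-coefficient operator $A_\nu^*(x,D)A_\nu(x,D)$ to write $P_{\nu-\ell}=Q_1(x,D)A_\nu(x,D)+Q_2(x,D)$ (Lemma \ref{lemma4.1}), disposes of the regularizing $Q_2$ by H\"older together with \eqref{hpmain}, and re-runs the proof of Lemma \ref{lemmafrac} with $(\dagger)$ replaced by Proposition \ref{prop1} (Lemma \ref{lemma4.2}). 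No absorption, no Neumann series, and no uniform tracking of symbol seminorms is required.

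Even granting a variable-coefficient $(\dagger)$, the absorption step you propose is only a program, not a proof. Each iterate multiplies by a fresh operator $\tilde H B_\bullet\in\Op S^0$, so the symbol seminorms of the iterated $Q_j^{(k)}$ grow geometrically, and showing that the corresponding weighted estimates hold with constants whose product against $r^k$ tends to zero requires a uniformity argument over a bounded set of $\Op S^{\nu-\ell}_{1,\delta}$ that you have not carried out. The paper's route sidesteps all of this, which is why Proposition \ref{prop1} is the real content of Section \ref{seclocal}.
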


The starting point in the proof of \eqref{main5} is the following simple lemma.

\begin{lemma}\label{lemma4.1}
Let $A(x,D)$ be elliptic of order $\nu$, $A_{\nu}(x,D)$ its principal part  and  
let $0<\ell<N$, $\ell\le\nu$. Given 
$P(x,D) \in OpS_{1,\delta}^{\nu-\ell}(\Omega)$, $0\leq \delta<1$, properly supported,  there exist properly supported pseudo-differential operators $Q_{1}(x,D) \in OpS^{-\ell}_{1,\delta}(\Omega)$ and $Q_{2}(x,D) \in OpS^{-\infty}(\Omega)$  such that for all $u \in C^{\infty}(\Omega;E)$
\begin{equation}\label{green1}
P(x,D)u(x)=Q_{1}(x,D)[A_{\nu}(y,D)u(x)]+Q_{2}(x,D)u(x).
\end{equation}    
\end{lemma}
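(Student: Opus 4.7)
\medskip
\noindent\textbf{Proof plan for Lemma \ref{lemma4.1}.}
The strategy is to construct a (properly supported) left parametrix of the principal part $A_{\nu}(x,D)$ and then compose it with $P(x,D)$ on the left. Since $A_{\nu}(x,D)$ is elliptic of order $\nu$ from $E$ to $F$, its symbol $A_{\nu}(x,\xi)$ is injective for $\xi\neq0$ and the matrix-valued symbol
\[
B(x,\xi)\doteq \bigl(A_{\nu}^{*}(x,\xi)A_{\nu}(x,\xi)\bigr)^{-1}A_{\nu}^{*}(x,\xi)
\]
is well defined, smooth, and positively homogeneous of degree $-\nu$ for $|\xi|\ge1$. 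After multiplication by a cutoff equal to $1$ for $|\xi|\ge1$ and $0$ near the origin, $B$ becomes a symbol in $S^{-\nu}_{1,0}(\Omega)$ and we quantize it to a properly supported operator $E(x,D)\in\Op S^{-\nu}_{1,0}(\Omega)$. By the standard symbol calculus for H\"ormander classes (\emph{cf.} \cite{H2}), since $0\le\delta<1$ the composition $E(x,D)\circ A_{\nu}(x,D)$ is well defined and
\[
E(x,D)\,A_{\nu}(x,D)=I+R(x,D),\qquad R(x,D)\in\Op S^{-\infty}(\Omega),
\]
with $R$ properly supported.

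\medskip
Composing on the left with the given properly supported $P(x,D)\in\Op S^{\nu-\ell}_{1,\delta}(\Omega)$ gives the identity
\[
P(x,D)u(x)=P(x,D)E(x,D)\bigl[A_{\nu}(y,D)u\bigr](x)-P(x,D)R(x,D)u(x)
\]
valid for every $u\in C^{\infty}(\Omega;E)$. I set
\[
Q_{1}(x,D)\doteq P(x,D)\circ E(x,D),\qquad Q_{2}(x,D)\doteq -P(x,D)\circ R(x,D),
\]
both of which are properly supported after composing with suitable properly supported cutoffs if necessary. The composition rule in the classes $S^{m}_{1,\delta}$ with $0\le\delta<1$ yields
\[
Q_{1}\in \Op S^{(\nu-\ell)+(-\nu)}_{1,\delta}(\Omega)=\Op S^{-\ell}_{1,\delta}(\Omega),
\]
while $Q_{2}=-P\circ R$ is the composition of an operator of finite order with a smoothing operator and therefore lies in $\Op S^{-\infty}(\Omega)$. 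This delivers the decomposition \eqref{green1}.

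\medskip
The only genuine ingredient is the parametrix construction for the overdetermined elliptic operator $A_{\nu}(x,D)$, which requires using $A_{\nu}^{*}A_{\nu}$ rather than $A_{\nu}$ itself in order to invert, exactly as in the formula for $H(\xi)$ in Subsection~2.1. The rest is a routine application of the symbol calculus, and the hypothesis $\delta<1$ is used only to guarantee that the classes $\Op S^{m}_{1,\delta}$ are stable under composition and admit properly supported representatives with smoothing errors. I do not anticipate a real obstacle; the main thing to keep track of is that the orders of $P$ and $E$ add up to $-\ell$, which is exactly what is needed for the subsequent application of Lemma \ref{lemmafrac} in the proof of Proposition \ref{prop4.1}.
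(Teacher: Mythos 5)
Your argument is essentially the paper's: both construct a left parametrix for $A_{\nu}(x,D)$ by inverting $A_{\nu}^{*}A_{\nu}$ in the symbol calculus and then compose on the left with $P$, with the orders adding up to $-\ell$. The paper phrases it as taking a parametrix $\tilde q\in\Op S^{-2\nu}_{1,0}$ for $\Delta_A=A_\nu^*A_\nu$ and setting $Q_1=P\tilde q A_\nu^*$, which is exactly your $Q_1=P\circ E$ with $E=\tilde q A_\nu^*$ (modulo smoothing); one minor caution is that quantizing the single symbol $B(x,\xi)$ only yields an error of order $-1$, so you should take the full asymptotic parametrix to get $R\in\Op S^{-\infty}$, but this is a standard routine fix and does not change the argument.
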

\begin{proof}
Since $\Delta_{A}\doteq A_{\nu}^{*}(x,D)A_{\nu}(x,D)$ is elliptic  then there exist a parametrix $\tilde{q}(x,D) \in S^{-2\nu}_{1,0}(\Omega)$ and a regularizing operator $\tilde{r}(x,D) \in S^{-\infty}(\Omega)$ such that 
\begin{align*}
P(x,D)u&=P(x,D)\tilde{q}(x,D)A^{\ast}_{\nu}(x,D)A_{\nu}(x,D)u+P(x,D)r(x,D)u\\
&\doteq Q_{1}(x,D)[A_{\nu}(x,D)u]+Q_{2}(x,D)u, \quad u \in C^{\infty}(\Omega)
\end{align*}
where $Q_{1}(x,D)\,\doteq \,P(x,D)\tilde{q}(x,D)A_{\nu}^{*}(x,D) \in S^{-\ell}_{1,\delta}(\Omega)$ and $Q_{2}(x,D)\doteq  P(x,D)\tilde{r}(x,D)\in S^{-\infty}(\Omega)$.  
\Qed
\end{proof}

In view of \eqref{green1}, in order  to obtain \eqref{main5}  it is enough to prove that for some ball $B=B(0,\delta)$ and  $C=C(B)>0$ the following estimates hold for 
$ u \in C_{c}^{\infty}(B;E)$:
\begin{equation}\label{4.4}
\bigg(\int_{\R^{N}}\frac{|Q_{2}(x,D)u(x)|^{q}}{|x|^{N-(N-\ell-\alpha)q}}\,dx\bigg)^{1/q}\leq C \int_{\R^{N}}|A_{\nu}(x,D)u|\,dx,
\end{equation} 
\begin{equation}\label{4.5}
\bigg(\int_{\R^{N}}\frac{|Q_{1}(x,D)A_\nu(x,D)u(x)|^{q}}{|x|^{N-(N-\ell-\alpha)q}}\,dx\bigg)^{1/q}\leq C \int_{\R^{N}}|A_{\nu}(x,D)u|\,dx.
\end{equation} 
The proof of \eqref{4.4} is easy because $Q_2(x,D)$ is regularizing. Choose {$\dfrac{N}{N-(N-\ell-1)q}< p<\dfrac{N}{N-(N-\ell)q}$ and then $r$ and $r_{\ell}^{\ast}$  such that
\[
\frac{1}{r}=1-\frac{1}{p}+\frac{\ell}{N}\,,\qquad
1-\frac{1}{N}<\frac{1}{r= \frac{1}{r^{\ast}_{\ell}}+\frac{\ell}{N}}\,.
\]
  We get 
\begin{align*}
\int_{\R^{N}}|Q_{2}(x,D)u(x)|^{q}|x|^{-N+(N-\ell)q}&\,dx\leq \\ 
\|Q_{2}(x,D)u\|^{q}_{L^{qp'}(B_{2})}  \,  
&\||x|^{-N+(N-\ell)q}\|_{L^{p}(B_{2})}, \quad (r_{\ell}^{\ast}\doteq p'q)\\
&\le C\,\|u\|^{q}_{L^{r}} \,\delta_{2}^{\frac{[-N+(N-\ell)q]p+N}{p}}\\
&\le C\,\|A_\nu(x,D) u\|^{q}_{L^1} 
\end{align*} 
where we  have used that  $Q_{2}(x,D)$ is bounded from $L^{r}$ to $L^{r_{\ell}^{*}}$ (see, e.g., \cite[Theorem 3.5 ]{AH}). 


\begin{lemma}\label{lemma4.2}
Let $A(x,D)$ as before, $A_{\nu}(x,D)$ be its homogeneous part of order $\nu$, 
$1\le q <\frac{N}{N-\ell}$, $0<\ell<N$ and $\ell\le\nu$. If $A(x,D)$ is elliptic and canceling on $\Omega$, then for any properly supported pseudo-differential operators $Q_{1}(x,D) \in OpS^{-\ell}_{1,\delta}(\Omega)$, $0 \leq \delta<1$,
there exists a neighborhood $\U$ of the origin  and $C>0$ such that \eqref{4.5}
holds for every $u \in C_{c}^{\infty}(\U;E)$.
\end{lemma}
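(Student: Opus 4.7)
The plan is to mimic the proof of Lemma \ref{lemmafrac}, with the modification concentrated in the step that exploits the cancellation inequality $(\dagger)$.

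The first step would be to express $Q_{1}(x,D)[A_{\nu}(\cdot,D)u]$ as an integral operator acting on $A_{\nu}(y,D)u$. Since $Q_{1}(x,D)\in \Op S^{-\ell}_{1,\delta}(\Omega)$ is properly supported, its Schwartz kernel $K(x,y)$ is smooth off the diagonal and, uniformly on any fixed compact subset of $\Omega\times\Omega$, satisfies
\[
|K(x,y)|\lesssim |x-y|^{\ell-N},\qquad |\partial_{y}K(x,y)|\lesssim |x-y|^{\ell-N-1},\quad x\ne y.
\]
Choosing the neighborhood $\U$ of the origin small enough that $Q_{1}$ sends $C_{c}^{\infty}(\U)$ into functions supported in a fixed compact set, I would have
\[
Q_{1}(x,D)[A_{\nu}(\cdot,D)u](x) =\int_{\R^{N}}K(x,y)\,A_{\nu}(y,D)u(y)\,dy, \quad u\in C_{c}^{\infty}(\U;E),
\]
so the kernel $K$ fits the hypotheses \eqref{orderk}--\eqref{orderk2} of Lemma \ref{lemmafrac}.

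Next, following that lemma, I would split $K=K_{1}+K_{2}$ with $K_{1}(x,y)=\psi(y/|x|)K(x,0)$ and decompose accordingly the weighted $L^{q}$ norm on the left hand side of \eqref{4.5} into pieces $J_{1}$ and $J_{2}$. The bound $J_{2}\lesssim \|A_{\nu}(x,D)u\|_{L^{1}}$ would follow by a \emph{verbatim} repetition of the $J_{2}$ argument in Lemma \ref{lemmafrac}, as that part uses only the kernel bounds and Minkowski's inequality, not any algebraic property of $A$.

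For $J_{1}$ the cancellation is crucial, and here I would invoke a local variable coefficient analog of $(\dagger)$: for $\U$ sufficiently small about the origin and every $\phi\in C_{c}^{\infty}(\U;F)$,
\[
\left|\int \phi(y)\cdot A_{\nu}(y,D)u(y)\,dy\right|\le C\sum_{j=1}^{m}\int |A_{\nu}(y,D)u(y)|\,|y|^{j}\,|D^{j}\phi(y)|\,dy.
\]
This is the variable coefficient counterpart of \cite[Lemma 2.2]{BVS}, obtained by the machinery of \cite{HP3} used in establishing \eqref{hpmain}. Applying it with $\phi(y)=\psi(y/|x|)f$, where $f\in F$ is a unit vector realising the absolute value of the inner integral, and using $|D^{j}\psi(y/|x|)|\lesssim |x|^{-j}$ together with $|K(x,0)|\lesssim |x|^{\ell-N}$, would reproduce word-for-word the $J_{1}$ estimate of Lemma \ref{lemmafrac} and yield $J_{1}\lesssim \|A_{\nu}(x,D)u\|_{L^{1}}$.

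The main obstacle is this local variable coefficient form of $(\dagger)$. The naive attempt to derive it by splitting $A_{\nu}(y,D)=A_{\nu}(0,D)+R(y,D)$, with $R$ having coefficients that vanish at $0$, and invoking the constant coefficient $(\dagger)$ for the canceling operator $A_{\nu}(0,D)$ leaves a remainder $\int \phi\cdot R(y,D)u\,dy$ involving $|D^{\nu}u|$, which is not controlled by $\|A_{\nu}(x,D)u\|_{L^{1}}$ at the endpoint $L^{1}$. The correct local version therefore requires $\U$ to be small enough that the perturbation is absorbed using the elliptic $L^{1}$ estimate underlying \eqref{hpmain}, along the lines of \cite{HP3}.
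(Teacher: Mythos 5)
Your outline of the proof of Lemma \ref{lemma4.2} matches the paper's almost exactly: the kernel bounds for $Q_1$ come from \cite{AH}, the decomposition $K=K_1+K_2$ with $K_1(x,y)=\psi(y/|x|)K(x,0)$ is the one used, $J_2$ is handled by repeating the constant-coefficient argument from Lemma \ref{lemmafrac}, and $J_1$ hinges on a local, variable-coefficient version of $(\dagger)$. The paper likewise isolates that local $(\dagger)$ as a separate statement (Proposition \ref{prop1}) whose proof is deferred, precisely as you do.

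The genuine gap is in your sketch of how one would prove the local $(\dagger)$. You correctly observe that freezing coefficients and applying the constant-coefficient $(\dagger)$ to $A_\nu(0,D)$ leaves a remainder $\int\varphi\cdot R(y,D)u\,dy$ involving full $\nu$-th order derivatives of $u$. But your proposed remedy --- shrink $\U$ and absorb this term via the elliptic $L^1$ estimate underlying \eqref{hpmain} --- would not work: \eqref{hpmain} controls $\|D^{\nu-1}u\|_{L^{N/(N-1)}}$, not $\|D^{\nu}u\|_{L^1}$, and there is no $L^1$--$L^1$ elliptic estimate at top order, so $\int|R(y,D)u|\,dy$ is not dominated by $\|A_\nu(\cdot,D)u\|_{L^1}$ even on a small ball. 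The paper's proof of Proposition \ref{prop1} is not a perturbation argument at all but an algebraic construction in the spirit of \cite[Lemma 2.2]{BVS} and \cite{HP3}: one produces a co-canceling differential operator $L(x,D)=\sum_{|\alpha|\le m}b_\alpha(x)\partial^\alpha$ with $\bigcap_{|\xi|=1}\ker\sigma_m(L)(0,\xi)=\{0\}$ and $\sigma_m(L)\circ\sigma_\nu(A)\equiv 0$; then one finds smooth matrix-valued $k_\alpha$ with $\sum_{|\alpha|=m}k_\alpha(x)b_\alpha(x)=I_F$ on a small ball, forms $P(x)=\sum_{|\beta|=m}\frac{x^\beta}{\beta!}k_\beta^*(x)$ so that $L_m^*(x,D)P(x)=I_F+R(x)$, and integrates by parts using that $L_m$ annihilates $A_\nu$ at the principal-symbol level to produce precisely the weights $|x|^j|D^j\varphi|$. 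That construction, rather than an absorption via a priori estimates, is the ingredient you would still need to supply.
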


The proof of Lemma \ref{lemma4.2} depends on a local analogue of estimate  $(\dagger)$ (stated right before Lemma \ref{lemmafrac})   
valid for elliptic canceling operators with variable coefficients.

\begin{proposition}\label{prop1}
Let $A(x,D)$ elliptic and canceling as before. There exist a ball $B=B(0,r) \subset \Omega$, $m \in \N^*$ and $C>0$ such that
\begin{align}\label{4.6}
\bigg| \int_{\R^{N}}\varphi(x)\cdot&A_{\nu}(x,D)u(x)\,dx \bigg| \\
\notag  &\leq C \sum_{j=1}^{m} \int_{\R^{N}}|A_{\nu}(x,D)u(x)||x|^{j}|D^{j}\varphi(x)|\,dx 
\end{align} 
for every $u \in C_{c}^{\infty}(B;E)$ and $\varphi \in C_{c}^{\infty}(B;F)$.
\end{proposition}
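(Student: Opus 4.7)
The plan is to reduce \eqref{4.6} to the constant-coefficient estimate $(\dagger)$ of \cite[Lemma 2.2]{BVS} by freezing the principal coefficients of $A_\nu(x,D)$ at the origin, and then to absorb the freezing error by working on a sufficiently small ball. Since $A(x,D)$ is elliptic and canceling at every point of $\Omega$, the constant-coefficient operator $A_\nu(0,D)$ is itself elliptic and canceling. Consequently $(\dagger)$ applies to it: there exist $m_0\in\N$ and $C_0>0$ such that for every $v\in\ccinf(\R^N;E)$ and $\varphi\in\ccinf(\R^N;F)$,
\[
\left|\int_{\R^N}\varphi\cdot A_\nu(0,D)v\,dx\right|\le C_0\sum_{j=1}^{m_0}\int_{\R^N}|A_\nu(0,D)v(x)|\,|x|^j\,|D^j\varphi(x)|\,dx.
\]

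Write $A_\nu(x,D)=A_\nu(0,D)+R(x,D)$ where $R(x,D)=\sum_{|\alpha|=\nu}(a_\alpha(x)-a_\alpha(0))D^\alpha$. By Taylor's theorem $a_\alpha(x)-a_\alpha(0)=\sum_i x_i b_{\alpha,i}(x)$ with $b_{\alpha,i}$ smooth, so the coefficients of $R$ vanish at the origin and, on any ball $B=B(0,r)\subset\Omega$, they satisfy $|a_\alpha(x)-a_\alpha(0)|\le Cr$. Splitting
\[
\int\varphi\cdot A_\nu(x,D)u\,dx=\int\varphi\cdot A_\nu(0,D)u\,dx+\int\varphi\cdot R(x,D)u\,dx,
\]
the first term is bounded, via $(\dagger)$ applied to $v=u$ and the pointwise inequality $|A_\nu(0,D)u|\le|A_\nu(x,D)u|+|R(x,D)u|$, by
\[
C_0\sum_{j=1}^{m_0}\int_B|A_\nu(x,D)u|\,|x|^j\,|D^j\varphi|\,dx\ +\ C_0\sum_{j=1}^{m_0}\int_B|R(x,D)u|\,|x|^j\,|D^j\varphi|\,dx.
\]

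The heart of the argument is to rewrite the $R$-contributions in the correct form. Using $R(x,D)u=\sum_i x_i\sum_\alpha b_{\alpha,i}(x)D^\alpha u$ and integrating by parts $\nu$ times in each summand, the Leibniz rule produces terms that either (i) carry an explicit extra factor $x_i$ (from the coefficient expansion) times $|D^k\varphi|$ with $k\le\nu$, which contributes to the right-hand side of \eqref{4.6} with a suitable index $j$; or (ii) involve lower-order derivatives $D^\gamma u$ with $|\gamma|<\nu$, which by the local Gagliardo--Nirenberg estimate \eqref{hpmain} are controlled in $L^1(B)$ by $\|A(x,D)u\|_{L^1}$, and hence by $\|A_\nu(x,D)u\|_{L^1}$ together with lower-order terms already absorbed in \eqref{hpmain}. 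After this expansion, the remaining uncontrolled piece has the form $Cr\cdot|\int_B \varphi\cdot A_\nu(x,D)u\,dx|$ coming from the uniform bound $|a_\alpha(x)-a_\alpha(0)|\le Cr$ on $B$. Choosing $r$ so small that this constant is less than $1/2$ absorbs that piece into the left-hand side, yielding \eqref{4.6} with some $m$ and $C>0$.

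The main obstacle is step three: the combinatorial bookkeeping of the Leibniz expansion, ensuring that every term produced can indeed be matched with either a summand of the form $|A_\nu(x,D)u|\,|x|^j\,|D^j\varphi|$, a lower-order error controlled through the a priori estimate \eqref{hpmain}, or an absorbable quantity proportional to the radius of $B$. This is the local analogue of the Taylor-expansion argument underlying $(\dagger)$, complicated by the non-constancy of the coefficients.
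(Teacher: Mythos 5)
Your proposal does not follow the paper's route and, more importantly, it has a genuine gap that I do not see how to close.

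The paper does not prove \eqref{4.6} by freezing coefficients at the origin and absorbing the error. Instead, following \cite[Theorem 4.2 and Lemma 2.4]{HP3}, it produces a differential operator $L(x,D)$ of some order $m$ whose principal part $L_m(x,D)$ is co-canceling at $0$ and satisfies $L_m(x,D)A_\nu(x,D)=0$. Because $L_m$ is co-canceling there are smooth matrix functions $k_\alpha$ on a small ball with $\sum_{|\alpha|=m}k_\alpha(x)b_\alpha(x)=I_F$; from these one builds $P(x)=\sum_{|\beta|=m}\frac{x^\beta}{\beta!}k^*_\beta(x)$ so that $L_m^*(x,D)(P(x))=I_F+R(x)$. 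Writing $\varphi=L_m^*(P)\varphi-R\varphi$, the term $L_m^*(P\varphi)$ is annihilated after integration against $A_\nu u$ (since $L_m A_\nu=0$), and the commutator $T_\varphi=L_m^*(P)\varphi-L_m^*(P\varphi)$ plus $R\varphi$ are both bounded pointwise by $\sum_{j\ge1}|x|^j|D^j\varphi|$. This algebraic mechanism, which transfers all derivatives onto $\varphi$ while keeping $A_\nu(x,D)u$ intact, is exactly what yields the weighted right-hand side of \eqref{4.6}.

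The central difficulty with your plan is that every error term you generate escapes the required form of the right-hand side, which is a \emph{weighted} integral of $|A_\nu(x,D)u|$ alone. Concretely: after applying $(\dagger)$ for $A_\nu(0,D)$ and replacing $|A_\nu(0,D)u|\le|A_\nu(x,D)u|+|R(x,D)u|$, you face $C_0\sum_j\int|R(x,D)u|\,|x|^j|D^j\varphi|\,dx$. On the ball of radius $r$ one has $|R(x,D)u|\le Cr\,|D^\nu u|$, but there is no $L^1$ bound of $|D^\nu u|$ by $|A_\nu(x,D)u|$ (this is precisely the failure of Calder\'on--Zygmund theory in $L^1$ that makes the whole subject nontrivial), so this term is not controllable by the right-hand side of \eqref{4.6} and is not of the form $Cr\cdot|\text{LHS}|$ either, because $R(x,D)$ is not a scalar multiple of $A_\nu(x,D)$. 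Likewise, integrating $\int\varphi\cdot R(x,D)u$ by parts produces integrals of the type $\int g(x)D^\gamma u\,dx$ with $|\gamma|<\nu$; the local Gagliardo--Nirenberg estimate \eqref{hpmain} bounds these by $\|A(x,D)u\|_{L^1}$ (the full nonhomogeneous operator), which is neither $\|A_\nu(x,D)u\|_{L^1}$ nor, a fortiori, the weighted quantity $\sum_j\int|A_\nu(x,D)u|\,|x|^j|D^j\varphi|\,dx$ that \eqref{4.6} demands. The proposition really does require the exact cancellation $L_mA_\nu=0$ together with the explicit $P(x)$, and I don't see how a freezing-plus-absorption argument can produce the weighted $|x|^j|D^j\varphi|$ structure on the right-hand side.
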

We will assume this result and postpone its  proof until the next section. 


Let us recall that the kernel distribution  $K(x,y)$  of the operator $Q_{1}(x,D) \in S^{-\ell}(\Omega)$
 \begin{equation}\label{q1}
 Q_{1}(x,D)f(x)=\int_{\R^{N}}K(x,y)f(y)dy
 \end{equation}
is smooth off the diagonal $\big\{(x,x) \in \Omega\times\Omega\big\}$ and satisfies the estimates (see, e.g., \cite[Theorem 1.1]{AH}) 
\begin{equation}\label{k1}
|K(x,y)|\leq C |x-y|^{\ell-N}, \quad x\neq y
\end{equation}
and
\begin{equation*}
|\partial_{y}K(x,y)|\leq C |x-y|^{\ell-N-1}, \quad x\neq y
\end{equation*}
for some $C>0$. We may now prove Lemma \ref{lemma4.2} adapting the arguments of 
Lemma \ref{lemmafrac}.

\begin{proof}
Consider  $\psi \in C_{c}^{\infty}(B_{1/2})$ a cut-off function such that $\psi \equiv 1$ on $B_{1/4}$ and  write the kernel  as $K(x,y)=K_{1}(x,y)+K_{2}(x,y)$ where
\begin{equation}\label{4.9}
\begin{array}{ll}
\displaystyle{K_{1}(x,y)=\psi\left(\frac{y}{|x|}\right)K(x,0)}.
\end{array}
\end{equation}
In order to obtain \eqref{4.5} it is sufficient show that 
\begin{align*}
J_{j}\doteq &\left(\int_{\R^{N}}\left| \int_{\R^{N}} K_{j}(x,y)A_{\nu}(y,D)u(y)dy   \right|^{q}|x|^{-N+(N-\ell)q}\,dx\right)^{1/q}\\
            &\lesssim \|A_{\nu}(x,D)u\|_{L^{1}},\qquad j=1,2,
\end{align*}
for every $u \in C_{c}^{\infty}(\U;E)$  and $1\le q <\frac{N}{N-\ell}$.

We have
\begin{align*}
J_{1}&\lesssim 
\left(\int_{\R^{N}}\left| \int_{\R^{N}} \psi\left(\frac{y}{|x|}\right)A_{\nu}(y,D)u(y)
\,dy\right|^{q}\frac{|K(x,0)|^{q}}{|x|^{N-(N-\ell)q}}\,dx\right)^{1/q}    \\
& \lesssim \displaystyle{\left(\int_{\R^{N}}\left| \int_{B_{|x|/2}} 
\frac{|y|}{|x|}A_{\nu}(y,D)u(y)\,dy\right|^{q}\frac{1}{|x|^{N}}\,dx\right)^{1/q}}\\
&=\displaystyle{\left(\int_{\R^{N}}\left| \int_{B_{|x|/2}}  |y|A_{\nu}(y,D)u(y)\,dy\right|^{q}\frac{1}{|x|^{N+q}}\,dx\right)^{1/q}},
\end{align*}
where the inequality in the first line follows from \eqref{4.11}. The estimate in the second line is a consequence of \eqref{4.6} in Proposition \ref{prop1} with the choice 
$\varphi(y)\doteq \psi(y/|x|) f$ where, for fixed $x$, $f\in F$ is a unit vector chosen so that 
\[
 \left|\int_{\R^{N}} \psi\left(\frac{y}{|x|}\right)f\cdot A_{\nu}(y,D)u(y)\,dy \right|=
\left| \int_{\R^{N}} \psi\left(\frac{y}{|x|}\right)A_{\nu}(y,D)u(y)\,dy\right|.
\] 
Then \eqref{k1} implies what we want after noticing  that, since all the derivatives of positive order of $\psi$ vanish at the origin,  
\[
|x|^j|D^j\varphi(y)|=\big|D^j\psi(y/|x|)\big|\le C\frac{|y|}{|x|},\quad 1\le j\le m.
\]
It follows that
\begin{align*}
J_{1}& \lesssim \int_{\R^{N}}|y|\,\,|A_{\nu}(y,D)u(y)|\left( \int_{\R^{N}\backslash B(0,2|y|)} \frac{1}{|x|^{N+q}}\,dx \right)^{1/q}dy \\
& \lesssim \int_{\R^{N}} |A_{\nu}(y,D)u(y)| dy. 
\end{align*}
Furthermore, the arguments in Lemma \ref{lemmafrac} may be used to show that 
$J_{2} \lesssim \|A_{\nu}(y,D)u\|_{L^{1}}$ so \eqref{4.5} is proved as we wished. 
 \Qed
\end{proof}

{The analog of inequality \eqref{main2.2a} for linear differential operators with variable coefficients is the following
\begin{proposition} \label{thm1.5}
Let $A(x,D)$ as before such that $\nu=N$. If $A(x,D)$ is elliptic and canceling there exist $B=B(0,\delta) \subset \Omega$ such that the a priori estimate
\begin{equation}\label{p6}
\int_{\R^{N}} \frac{|u(x)|}{|x|^{N}} \,dx \leq C \int_{\R^{N}}|A(x,D)u|| \omega(x)\,dx, \quad \omega(x)=|\log |x||
\end{equation} 
holds for some $C=C(B)>0$ and for all $u \in C_{c}^{\infty}(B;E)$ satisfying $u(0)=0$.
\end{proposition}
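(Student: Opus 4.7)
The plan is to reproduce in the local, variable-coefficient setting the strategy used to establish Theorem B, by combining the weighted Hardy inequality \eqref{2.6} of Lemma \ref{l2.1} with a local analog of Proposition \ref{prop2.2}. Shrinking $B$ so that $\delta<1/e$ ensures that $|\log|x||\ge 1$ on $B$, hence $\omega(x)=|\log|x||$ is comparable on $B$ to the weight $1+|\log|x||$ used in the global estimate. Since $u(0)=0$, \eqref{2.6} reduces the problem to proving
\[
\int_{\R^N}|\nabla u(x)|\,\frac{|\log|x||}{|x|^{N-1}}\,dx \;\le\; C \int_{\R^N}|A(x,D)u(x)|\bigl(1+|\log|x||\bigr)\,dx, \qquad u\in C_c^\infty(B;E).
\]
As in the proof of Proposition \ref{prop4.1}, we may then replace $A(x,D)$ by its principal part $A_\nu(x,D)$: each lower-order contribution $\|a_k(x)D^k u\|_{L^1(B)}$ is controlled by $\|D^k u\|_{L^1}$, and \eqref{hpmain} together with Hölder and Sobolev embedding on the bounded ball gives $\|D^k u\|_{L^1}\lesssim \|A(x,D)u\|_{L^1}$ for $0\le k\le \nu-1$, which is absorbed since $1\le 1+|\log|x||$.

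Apply Lemma \ref{lemma4.1} with $P=\nabla$ (so $\nu-\ell=1$, $\ell=N-1$) to obtain properly supported pseudodifferential operators $Q_1\in \Op S^{-(N-1)}_{1,\delta}(\Omega)$ and $Q_2\in \Op S^{-\infty}(\Omega)$ with
\[
\nabla u(x)=Q_1(x,D)[A_\nu(\cdot,D)u](x)+Q_2(x,D)u(x).
\]
The $Q_2$-contribution is handled immediately: since $Q_2$ is smoothing one has $\|Q_2 u\|_{L^\infty(B)}\lesssim \|u\|_{L^1}$, while $|\log|x||/|x|^{N-1}$ is locally integrable on $B$, so this term is bounded by $C\,\|u\|_{L^1(B)}\lesssim \|A(x,D)u\|_{L^1}$ by the reduction above.

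For the main term, write $Q_1 f(x)=\int K(x,y)f(y)\,dy$ with off-diagonal bounds $|K(x,y)|\lesssim |x-y|^{-1}$ and $|\partial_y K(x,y)|\lesssim |x-y|^{-2}$. Following the decomposition used in Lemma \ref{lemma4.2} and Proposition \ref{prop2.2}, split $K=K_1+K_2$ with $K_1(x,y)=\psi(y/|x|)K(x,0)$. The $K_1$-piece is estimated by applying Proposition \ref{prop1} to the test function $\varphi(y)=\psi(y/|x|)f$, where $f\in F$ is a unit vector chosen for each $x$; this runs exactly the computation in the proof of Proposition \ref{prop2.2}, yielding a bound of the form $\int |A_\nu(y,D)u(y)|\bigl(1+|\log|y||\bigr)\,dy$. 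The $K_2$-piece is handled verbatim as in Proposition \ref{prop2.2} via Minkowski's inequality and the two-region split $\{2|y|>|x|\}\cup\{4|y|<|x|\}$, exploiting the same cancellation $K_2(x,y)=K(x,y)-K(x,0)$ in the outer region.

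The main obstacle is verifying the weighted kernel estimates
\[
\int_{\{|x|<2|y|\}}\frac{|\log|x||}{|x-y|\,|x|^{N-1}}\,dx \;\lesssim\; 1+|\log|y||, \qquad \int_{\{|x|>4|y|\}}\frac{|y|\,|\log|x||}{|x|^{N+1}}\,dx \;\lesssim\; 1+|\log|y||,
\]
but these are precisely the computations already carried out in the proof of Proposition \ref{prop2.2}, so the variable-coefficient localization introduces no essential new difficulty once Proposition \ref{prop1} is available.
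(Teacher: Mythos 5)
Your proposal follows the same strategy the paper indicates for this result: reduce via the weighted Hardy inequality \eqref{2.6} (valid since $u(0)=0$) to a local analog of Proposition \ref{prop2.2}, and then establish that analog by replaying the constant-coefficient argument with the pseudodifferential machinery of Section \ref{seclocal} — namely Lemma \ref{lemma4.1} to write $\nabla u = Q_1[A_\nu u] + Q_2 u$ with $Q_1\in \Op S^{-(N-1)}_{1,\delta}$, the smoothing estimate for $Q_2$, the kernel bounds of \cite{AH} for $Q_1$, the $K_1/K_2$ decomposition, and Proposition \ref{prop1} in place of $(\dagger)$. The paper's own proof is essentially a one-sentence pointer to Theorem B and Proposition \ref{prop2.2}, so the extra detail you supply (including the normalization $\delta<1/e$ so that $\omega\sim 1+\omega$ on $B$) is exactly the right expansion.

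One slip worth flagging: in the reduction from $A(x,D)$ to its principal part $A_\nu(x,D)$, you control the lower-order contributions only in the unweighted norm $\|a_k D^k u\|_{L^1(B)}$. But the right-hand side of the inequality you are reducing to is the weighted quantity $\int |A(x,D)u|\,(1+|\log|x||)\,dx$, and since $|A_\nu u|\le |Au|+\sum_{k<\nu}|a_k D^k u|$ what you actually need is
\[
\int_{B}|D^k u(x)|\,(1+|\log|x||)\,dx \;\lesssim\; \|A(x,D)u\|_{L^1},
\qquad 0\le k\le \nu-1,
\]
so the logarithmic weight cannot simply be dropped on the left. The fix is routine: by H\"older on $B$, using that $N/(N-1)$ and $N$ are conjugate,
\[
\int_{B}|D^k u|\,(1+|\log|x||)\,dx \;\le\; \|D^k u\|_{L^{N/(N-1)}(B)}\;\bigl\|1+|\log|\cdot||\bigr\|_{L^{N}(B)},
\]
where the second factor is finite, and $\|D^k u\|_{L^{N/(N-1)}(B)}\lesssim \|D^{\nu-1}u\|_{L^{N/(N-1)}(B)}\lesssim \|A(x,D)u\|_{L^1}$ by Poincar\'e on $B$ followed by \eqref{hpmain}. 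With this adjustment the reduction is complete and the rest of your argument goes through as written.
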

The proof is just a  repetition of the arguments in Theorem B and combines  \eqref{2.6} with a local version of \eqref{main2.2} given by
\begin{equation*}
\int_{\R^{N}} \frac{|\nabla u(x)|}{|x|^{N-1}} \omega(x) \,dx \leq C \int_{\R^{N}}|A_{\nu}(x,D)u|| \omega(x)\,dx, \quad u \in C_{c}^{\infty}(B;E). 
\end{equation*} 
}

\subsection{Proof of Proposition \ref{prop1}} \label{sec4.1}
We will identify $E$ and $F$ with standard Euclidean spaces endowed with the usual inner product that will be denoted by a dot, for instance, we will write $f_1\cdot f_2$ for
 $f_1,f_2$ in $F$. If $k \in \mathcal{L}(F)$ is a linear transformation, $k^* \in \mathcal{L}(F)$ will denote the adjoint of $k$ with respect to the inner product.

Since $A(x,D)$ is elliptic and canceling, it follows from the proof of  \cite[Theorem 4.2]{HP3} that there exist a  linear differential operator 
$L(x,D):C^{\infty}(\Omega;F) \rightarrow C^{\infty}(\Omega;F)$ of order, say,  $m$
  such that
\begin{equation}\label{4.11}
\bigcap_{|\xi|=1}\sigma_{\nu}(A)(0,\xi)[E]=\bigcap_{|\xi|=1}Ker \;\sigma_{m}(L)(0,\xi).
\end{equation}
Since $A(x,D)$ is canceling at 
$0$, both members in \eqref{4.11} reduce to $\{0\}$ and, in particular, $L(x,D)$ is co-canceling at $0$, which means that
\[
\bigcap_{|\xi|=1}Ker \;\sigma_{m}(L)(0,\xi)=\{0\}.
\]
Writing 
$\displaystyle{L(x,D)=\sum_{|\alpha|\leq m}b_{\alpha}(x)\partial^{\alpha}}$, it follows from  \cite[Lemma 2.4]{HP3} that there exist a ball 
$B=B(0,r) \subset \Omega$ and functions $k_{\alpha} \in C^{\infty}(B; \mathcal{L}(F))$ such that 
\begin{equation}\label{ident1}
\sum_{|\alpha|=m}k_{\alpha}(x)b_{\alpha}(x)=I_{F}, \quad x \in B.
\end{equation} 
 Let $P: B \rightarrow \mathcal{L}(F)$ be given by 
 $\displaystyle{P(x)=\sum_{|\beta|=m}\frac{x^{\beta}}{\beta!}k_{\beta}^{\ast}(x)}$
and let $\alpha \in \Z^{N}_{+}$ satisfy  $|\alpha|=m$.
Hence, since  $\partial^{\alpha}(x^{\beta})=0$ for $|\beta|=m$ when $\alpha \neq \beta$, we have
 \begin{align*}
 \partial^{\alpha}  P(x)&=
\partial^{\alpha}\Bigg(\frac{x^{\alpha}}{\alpha!}k_{\alpha}^{\ast}(x)+   
\sum_{\substack{|\beta|=m \\ \beta \neq \alpha}}\frac{x^{\beta}}{\beta!}k_{\beta}^{\ast}(x) \Bigg)\\
 &=k_{\alpha}^{\ast}(x)+ 
\sum_{\substack{|\beta|>0\\|\gamma|\le m,\,|\delta|=m }} c(\beta,\gamma,\delta)x^{\beta}\partial^{\gamma}k^{*}_{\delta}(x)\\
& \doteq k_{\alpha}^{\ast}(x)+ R_{\alpha}(x).\
 \end{align*}
Thus, thanks to  \eqref{ident1},  the transpose $L_{m}^{*}(x,D)$ of $L_{m}(x,D)$ satisfies
 \[
L_{m}^{*}(x,D)(P(x))=\sum_{|\alpha|=m}b^{\ast}_{\alpha}(x)\partial^{\alpha}(P(x))=Id_{F}+R(x),
\]
 where $R(x)=\sum_{|\alpha|=m}b^{\ast}_{\alpha}(x)R_{\alpha}(x)$. 
Then, given $u \in C_{c}^{\infty}(B;E)$ and $\varphi \in C_{c}^{\infty}(B;F)$ we may write
$\varphi(x)=L_{m}^{*}(x,D)(P(x))\varphi(x)-R(x)\varphi(x)$ and 
\begin{equation}\label{4.13}
\begin{aligned}
\int_{R^{N}}&\varphi(x)\cdot A_{\nu}(x,D)u =I_{1}+I_{2}, \\
 I_{1}&= \int_{R^{N}} L^{*}_{m}(x,D)(P(x))\,\varphi \cdot A_{\nu}(x,D)u, \\
 I_{2}&=- \int_{R^{N}} R(x)\varphi(x)\cdot A_{\nu}(x,D)u. \\
\end{aligned}
\end{equation}
Writing  $T_{\varphi}(x)=L^{*}_{m}(x,D)(P(x))\,\varphi - L^{*}_{m}(x,D)(P\varphi)(x)$ we have
\[
I_{1}=\int_{\R^{N}}T_{\varphi}(x)  
\]
since $L_{m}(x,D)A_{\nu}(x,D)=0$. 
By Leibniz rule we have 
  \begin{equation}\nonumber
T_{\varphi}(x)=-  \sum_{|\alpha|=m}b_{\alpha}^{*}(x)\sum_{0<\gamma\le\alpha} \binom{\alpha}{\gamma}\partial^{\alpha-\gamma}P(x)\partial^{\gamma}\varphi(x)
\end{equation}  
where
\[
\partial^{\alpha-\gamma}P(x)=\sum_{|\beta|=m}\sum_{\eta \leq \alpha-\gamma} \left[\frac{1}{\beta!}\partial^{\alpha-\gamma-\eta}k_{\beta}^{*}(x) \right]\partial^{\eta}x^{\beta}.
\]
Now
 \begin{equation*}
 \partial^\eta(x^{\beta})= 
 \begin{cases}
\displaystyle(\beta!/(\beta-\eta)!)x^{\beta-\eta}& \text{if $\eta \leq \beta$},\\
0&\text{otherwise}.
\end{cases}
\end{equation*}
Therefore
$$|\partial^{\alpha-\gamma}P(x)|\lesssim \sum_{|\beta|=m}\sum_{\substack{\eta \leq \alpha-\gamma\\ \eta \leq \beta}} |x|^{|\beta-\eta|} \lesssim |x|^{|\gamma|},$$
for $x$ in the support of $u(x)$ and 
$$|\beta-\eta|=|\beta|-|\eta| \geq m-|\alpha-\gamma|\geq m -m +|\gamma|=|\gamma|.$$ 
Combining the previous estimates we conclude that
\begin{equation}\label{4.14}
|T_{\varphi}(x)|\leq C \sum_{j=1}^{m}|x|^{j}|D^{j}\varphi(x)|.
\end{equation}  
On the other hand, it follows from the definition of $R(x)$ that
\begin{equation}\label{4.15}
|R(x)|\leq C \sum_{j=1}^{m}|x|^{j}|D^{j}\varphi(x)|.
\end{equation}  
Then \eqref{4.13}, \eqref{4.14} and \eqref{4.15} imply \eqref{4.6} and 
Proposition \ref{prop1} is proved.

\Qed


\subsection{{Applications to elliptic operators associated to systems of complex vector fields}} \label{sec4.2}
{We first present local versions of the classical Hardy-Sobolev inequalities for linear differential operators with variables coefficients.
\begin{proposition}\label{prop4.4}
Let $A(x,D)$ an elliptic linear differential operators of order $\nu$ with smooth complex coefficients in $\Omega \subset \R^{N}$, $N\ge2$, from $E$ to $F$. Consider $\ell \in \left\{ 1,...,\min(\nu,N-1)\right\}$ and $1\leq q<N/(N-\ell)$. Then we have the equivalents properties:
\begin{enumerate}
\item[(i)] for every $x_{0} \in \Omega$ there exist a ball $B=B(x_{0},r) \subset \Omega$ such that the a priori estimate
\begin{equation}\label{maineq3}
\left(\int_{\R^{N}}\frac{|D^{\nu-\ell}u(x)|^{q}}{|x-x_0|^{N-(N-\ell)q}}\,dx \right)^{1/q}\leq C \int_{\R^{N}}|A(x,D)u|dx, \  u \in C_{c}^{\infty}(B;E),
\end{equation} 
holds for some $C=C(B)>0$,  $u \in C_{c}^{\infty}(\R^{N};E)$;
\item[(ii)] $A(x,D)$ is canceling on $\Omega$.
\end{enumerate}
\end{proposition}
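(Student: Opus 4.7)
The plan is to prove the equivalence (i)$\Leftrightarrow$(ii) by treating each direction separately, with (ii)$\Rightarrow$(i) being an immediate specialization of Theorem C and (i)$\Rightarrow$(ii) requiring a frozen-coefficient scaling reduction to the classical Bousquet--Van Schaftingen inequality \eqref{eq1}.

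For (ii)$\Rightarrow$(i), I would invoke Theorem C directly. Since $\ell\in\{1,\dots,\min(\nu,N-1)\}$, the number $\nu-\ell$ is a nonnegative integer, and for each multi-index $\alpha$ with $|\alpha|=\nu-\ell$ the differential operator $D^\alpha$ is properly supported and belongs to $\Op S^{\nu-\ell}_{1,0}(\Omega)$. Applying Theorem C with $P_{\nu-\ell}=D^\alpha$ and intersecting the finitely many resulting neighborhoods of $x_0$ produces a single ball $B=B(x_0,r)$ on which
\[
\bigg(\int_{\R^N}\frac{|D^\alpha u(x)|^q}{|x-x_0|^{N-(N-\ell)q}}\,dx\bigg)^{1/q}\leq C_\alpha\int_{\R^N}|A(x,D)u|\,dx
\]
for each $|\alpha|=\nu-\ell$ and every $u\in C_c^\infty(B;E)$. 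Summing over $\alpha$ and using $|D^{\nu-\ell}u|^q\simeq\sum_{|\alpha|=\nu-\ell}|D^\alpha u|^q$ yields \eqref{maineq3}.

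The converse (i)$\Rightarrow$(ii) I would prove by a frozen-coefficient scaling argument. Fix $x_0\in\Omega$ and, after translation, assume $x_0=0$. For an arbitrary $\phi\in C_c^\infty(B(0,1);E)$ and small $\eps>0$, set $u_\eps(x)=\phi(x/\eps)$, which is supported in $B(0,\eps)\subset B$ for $\eps<r$. A direct change of variables $y=x/\eps$ shows that the left-hand side of \eqref{maineq3} applied to $u_\eps$ equals
\[
\eps^{N-\nu}\bigg(\int_{\R^N}\frac{|D^{\nu-\ell}\phi(y)|^q}{|y|^{N-(N-\ell)q}}\,dy\bigg)^{1/q}.
\]
Taylor-expanding the coefficients of $A(x,D)$ at the origin and using $\operatorname{supp}u_\eps\subset B(0,\eps)$ gives
\[
A(x,D)u_\eps(x)=\eps^{-\nu}[A_\nu(0,D)\phi](x/\eps)+O(\eps^{-\nu+1}),
\]
uniformly on $B(0,\eps)$, since $a_\alpha(x)=a_\alpha(0)+O(\eps)$ for $|\alpha|=\nu$ and lower-order terms contribute at most $O(\eps^{-(\nu-1)})$. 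Integrating and changing variables yields $\|A(x,D)u_\eps\|_{L^1}=\eps^{N-\nu}\big[\|A_\nu(0,D)\phi\|_{L^1}+O(\eps)\big]$. Inserting these into \eqref{maineq3}, cancelling the common factor $\eps^{N-\nu}$, and letting $\eps\searrow0$ produces the constant-coefficient estimate
\[
\bigg(\int_{\R^N}\frac{|D^{\nu-\ell}\phi(y)|^q}{|y|^{N-(N-\ell)q}}\,dy\bigg)^{1/q}\leq C\|A_\nu(0,D)\phi\|_{L^1}
\]
for every $\phi\in C_c^\infty(B(0,1);E)$. Since $A_\nu(0,D)$ is homogeneous of degree $\nu$ and both sides scale with the same power of $R$ under the dilation $\phi(\cdot)\mapsto\phi(R\cdot)$, the inequality extends to all $\phi\in C_c^\infty(\R^N;E)$. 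The necessity direction of the Bousquet--Van Schaftingen characterization of \eqref{eq1} then forces $A_\nu(0,D)$ to be canceling, which is precisely the cancellation of $A(x,D)$ at $x_0$. Since $x_0\in\Omega$ was arbitrary, (ii) follows.

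The main obstacle I anticipate is the careful bookkeeping of the error terms in the Taylor expansion of the coefficients: one must verify that the frozen-coefficient remainder for the top-order part and the full contribution of the lower-order terms of $A(x,D)$ are both $O(\eps^{-\nu+1})$ uniformly on $B(0,\eps)$, so that they contribute $O(\eps^{N-\nu+1})$ to $\|A(x,D)u_\eps\|_{L^1}$ and vanish after dividing by $\eps^{N-\nu}$ and passing to the limit. This is a standard but delicate computation that underpins the entire reduction to constant coefficients.
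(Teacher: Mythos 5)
Your proof is correct and follows essentially the same two-part strategy as the paper: the implication (ii)$\Rightarrow$(i) is deduced from Proposition \ref{prop4.1} (which is the detailed form of Theorem C) by taking $P_{\nu-\ell}=D^{\nu-\ell}$ (or, as you do, component-wise $D^\alpha$ with a finite intersection of neighborhoods, which is equally valid), and the converse (i)$\Rightarrow$(ii) is established via a frozen-coefficient scaling $u_\eps(x)=u(\eps^{-1}(x-x_0))$, cancellation of the factor $\eps^{N-\nu}$, passage to the limit, and an appeal to the Bousquet--Van Schaftingen necessity criterion for the constant-coefficient operator $A_\nu(x_0,D)$. If anything, your treatment of the lower-order terms is cleaner than the paper's: you dispose of them directly in the scaling limit (they produce contributions $O(\eps^{N-\nu+1})$), whereas the paper compresses this into a rather terse ``decreasing $r$ to absorb the lower-order terms'' remark before the rescaling; the two mechanisms give the same outcome.
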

\begin{remark}
The limiting case of this proposition for $q=N/(N-\ell)$ and $\ell=1$, i.e., a characterization result for the Sobolev-Gagliardo-Nirenberg inequality, was obtained in \cite{HP3}.
\end{remark}
}
%

\begin{remark}
{It follows from a simple adaptation of Theorem \ref{thm2.1} that if  \eqref{maineq} holds for some $1<q<N/(N-\ell)$ then $A(x,D)$ must to be elliptic on $\Omega$.}
\end{remark}

The inequality \eqref{maineq3} at $x_{0}=0$ is a consequence of Proposition \ref{prop4.1} applying  $P_{\nu-\ell}:=D^{\nu-\ell}$ assuming $A(x,D)$ canceling. Clearly the argument may be extended for each $x_{0} \in \Omega$. The converse follows from a general method already used in \cite[Theorem 2.2]{HP3} that reduces the question to $A_{\nu}(x_{0},D)$ i.e. an homogenous linear differential operator with constant coefficients. 
Assume that $\eqref{maineq3}$ holds for some $B=B(x_0,r)\subset\Omega$ and $C>0$. Decreasing $r>0$ conveniently to absorb the terms
$\|a_\alpha(\cdot) D^\alpha u\|_{L^1}$ with $|\alpha|<\nu$ and enlarging $C>0$, we may assume that for all $u \in C_{c}^{\infty}(B;E)$
\begin{equation}\label{maineq2}
\left(\int_{\R^{N}}\frac{|D^{\nu-\ell}u(x)|^{q}}{|x-x_0|^{N-(N-\ell)q}}\,dx \right)^{1/q}\leq C \int_{\R^{N}}|A_{\nu}(x,D)u|\,dx, 
\end{equation} 
where $A_{\nu}(x,D)=\sum_{|\alpha|=\nu}a_{\alpha}(x)D^{\alpha}$ is the principal part of $A(x,D)$. Choose any $u\in\ccinf(\R^N;E)$ and set $u_{\eps}(x)\doteq u(\eps^{-1}(x-x_0))$. 
For sufficiently small $\eps>0$,  $supp (u_\eps) \subset B$ and  \eqref{maineq2} holds for $u=u_\eps$. 
Canceling powers of $\eps$ on both sides and letting $\eps\searrow0$ we obtain
\[
\left(\int_{\R^{N}}\frac{|D^{\nu-\ell}u(x)|^{q}}{|x-x_0|^{N-(N-\ell)q}}\,dx \right)^{1/q} \le   C \int_{\R^{N}}|A_{\nu}(x_{0},D)u|dx ,
\]
for any $ u \in C_{c}^{\infty}(\R^N;E)$.
Applying  \cite[Proposition 4.1]{BVS} 
we conclude that $A_\nu(x_0,D)$ is canceling, i.e., $A(x,D)$ is canceling at $x_0$. 
\Qed

We now consider applications of Proposition \ref{prop4.4} to a special class of canceling elliptic differential operators associated to complex vector fields recently studied in the works \cite{HP1}, \cite{HP2} and \cite{HP3}. Consider $n$ complex vector fields $L_{1},...,L_{n}$, $n \geq 1$, with smooth coefficients defined on a neighborhood of the origin $\Omega$ in $\R^{N}$,   
 $N\geq 2$. 
 
 \begin{corollary}\label{order1}
 If the system of vector fields $L_{1},...,L_{n}$, $n\geq 2$, with smooth complex coefficients, is linearly independent and elliptic then every point $x_{0} \in \Omega$ is contained in a ball $B=B(x_{0},r) \subset \Omega$ such that 
 \begin{equation}\label{ineqorder1}
 \int_{\R^{N}} \frac{|u(x)|}{|x|}\,dx\leq C \sum_{j=1}^{n}\|L_{j}u\|_{L^{1}}, \quad u \in C_{c}^{\infty}(B),
 \end{equation}
 for some $C=C(B)>0$. 
 \end{corollary}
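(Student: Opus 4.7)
The plan is to view the overdetermined system $(L_1,\ldots,L_n)$ as a single first-order differential operator
\[
A(x,D)\colon C_c^\infty(\Omega;\C)\longrightarrow C_c^\infty(\Omega;\C^n), \qquad A(x,D)u = (L_1u,\ldots,L_nu),
\]
and then apply Proposition \ref{prop4.4} with $\nu=1$, $\ell=1$ and $q=1$ (the only admissible choice, since $N\geq 2$ forces $\min(\nu,N-1)=1$ and $q=1$ is the left endpoint of the permitted range). For these values $D^{\nu-\ell}u = u$ and the weight exponent becomes $N-(N-\ell)q = 1$, so, after translating the point in question to the origin as throughout the section, Proposition \ref{prop4.4} reads
\[
\int_{\R^N}\frac{|u(x)|}{|x|}\,dx \;\leq\; C\int_{\R^N}|A(x,D)u(x)|\,dx \;\leq\; C\sum_{j=1}^n\|L_ju\|_{L^1},
\]
where the second step is the elementary bound $|A(x,D)u|\leq \sum_j|L_ju|$. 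This is exactly \eqref{ineqorder1}.

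To legitimize this application, I must verify that $A(x,D)$ is both elliptic and canceling on $\Omega$. Ellipticity is immediate from the hypothesis on the system; the substantive point is the cancellation at an arbitrary $x_0\in\Omega$. Writing $L_j = \sum_k a_{jk}(x)\partial_k$ and $\sigma_j(\xi) = \sum_k a_{jk}(x_0)\xi_k$, the principal symbol at $x_0$ acts on $e\in\C$ by $A(x_0,\xi)e = i\bigl(\sigma_1(\xi),\ldots,\sigma_n(\xi)\bigr)e$, so its image is the complex line $\C\cdot A(x_0,\xi)\subset\C^n$.

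The main algebraic step is then to show that $\bigcap_{\xi\neq 0}\C\cdot A(x_0,\xi) = \{0\}$. Suppose for contradiction there were a nonzero $v\in\C^n$ with $A(x_0,\xi)\in\C v$ for every $\xi\neq 0$. Fixing an index $j_0$ with $v_{j_0}\neq 0$, this forces $\sigma_j(\xi) = (v_j/v_{j_0})\sigma_{j_0}(\xi)$ for every $\xi$ and every $j$, hence $L_j(x_0) = (v_j/v_{j_0})L_{j_0}(x_0)$ as tangent vectors, contradicting the pointwise linear independence of $L_1,\ldots,L_n$. The hypothesis $n\geq 2$ enters essentially here, since for $n=1$ the image of a nonzero scalar symbol already fills $\C$ and the canceling condition trivially fails. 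With cancellation verified at every $x_0\in\Omega$, Proposition \ref{prop4.4} supplies the ball $B = B(x_0,r)$ and the desired estimate.
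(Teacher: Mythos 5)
Your proposal is correct and follows essentially the same route as the paper: both apply Proposition~\ref{prop4.4} with $\nu=\ell=q=1$ to the first-order operator $\nabla_{\L}u=(L_1u,\dots,L_nu)$. The one difference is that the paper simply cites \cite[Lemma~4.1]{HP3} for the fact that $\nabla_{\L}$ is elliptic and canceling, whereas you supply a short, self-contained verification of the cancellation condition (if a nonzero $v$ lay in every $A(x_0,\xi)[\C]$ the symbols $\sigma_j(\xi)$ would all be proportional, forcing the $L_j(x_0)$ to be collinear and contradicting linear independence for $n\geq 2$); that argument is sound and makes the proof self-contained.
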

 
 The ellipticity means that for any real 1-form $\omega$ satisfying $\omega(L_{j})=0$ for $j=1,...,n$ implies $\omega=0$.
 The proof follows from applying Proposition \ref{prop4.4} 
to  the operator $A(x,D):=\nabla_{\L}:C^{\infty}(\Omega) \rightarrow C^{\infty}(\Omega)$ defined by $\nabla_{\L}u\,{\doteq}\,(L_{1}u,...,L_{n}u)$ which is an elliptic and canceling operator (see \cite[Lemma 4.1]{HP3}). In the particular setting $L_{j}=\partial_{x_{j}}$ and $n=N$ the estimate \eqref{ineqorder1} expresses  a local version of the classical Hardy inequality. 
 
 \begin{remark}
   Clearly the estimate \eqref{ineqorder1} implies    
  \begin{equation}\nonumber
\|u\|_{L^{1}} \leq C(B) \sum_{j=1}^{n}\|L_{j}u\|_{L^{1}}, \quad u \in C_{c}^{\infty}(B),
 \end{equation}
 where $C(B) \rightarrow 0$ when $r \rightarrow 0$.
 \end{remark}

Next we describe a version of Corollary \ref{order1} in the setup of complexes or pseudo complexes $\left\{d_{\L,k} \right\}_{k}$ associated to the system $\L=\left\{L_{1},...,L_{n}\right\}$. Let  $C^{\infty}(\Omega;\Lambda^{k}\R^{N})$ be the space of $k$-forms on $\R^{N}$, $0\leq k \leq n$, with complex smooth coefficients defined on $\Omega$. Each  $f \in C^{\infty}(\Omega;\Lambda^{k}\R^{N})$ may be written as  
$
\displaystyle{f=\sum_{|I|=k}f_{I}dx_{I}}, \quad dx_{I}=dx_{i_1}\wedge\cdots\wedge dx_{i_k},
$
where $f_{I}\in C^{\infty}(\Omega)$ and $I=\left\{i_{1},...,i_{k}\right\}$ is a set of strictly increasing indexes with $i_{l}\in \left\{1,...,n\right\}$,  $l=1,...,k$.  Consider the differential operators 
\[
d_{\L,k}: C^{\infty}(\Omega;\Lambda^{k}\R^{N})\rightarrow C^{\infty}(\Omega;\Lambda^{k+1}\R^{N})
\]
 given by
$d_{\L,0}f=\sum_{j=1}^{n}(L_{j}f)dx_{j}$ for $f \in C^{\infty}(\Omega)$
and for  $1\le k\le n-1$,
\begin{equation}\label{4.5a}
d_{\L,k}f=\sum_{|I|=k}(d_{\L,0}f_{I})dx_{I}=\sum_{|I|=k}\sum_{j=1}^{n}(L_{j}f_{I})dx_{j} \wedge dx_{I}.
\end{equation}  
We also define the dual $d^{*}_{\L,k}: C^{\infty}(\Omega;\Lambda^{k+1}\R^{N})\rightarrow 
C^{\infty}(\Omega;\Lambda^{k}\R^{N})$, for $0 \leq k \leq n-1$, determined by 
\[
\int d_{\L,k}u\cdot \overline{v}\, dx=\int u \cdot \overline{d^{*}_{\L,k}v}\,dx,
\ u \in C_{c}^{\infty}(\Omega;\Lambda^{k}\R^{N}),\ v \in 
C_{c}^{\infty}(\Omega;\Lambda^{k+1}\R^{N}),
\]
where the dot indicates the standard pairing on forms of the same degree.  In general, the chain operators $\left\{ d_{\L,k}\right\}_{k}$ do not define a complex, however the property $d_{\L,k+1}\circ d_{\L,k}=0$ is always satisfied  in the sense of principal symbols. We will refer to $(d_{\L,k},C^{\infty}(\Omega;\Lambda^{k}\R^{N}))$ as the pseudo-complex $\{d_{\L}\}$ associated with $\L$ on $\Omega$. In the 
involutive 
situation the chain operators  $\left\{ d_{\L,k}\right\}_{k}$ define a true complex associated to the structure $\L$. This structure is precisely the de Rham complex for the special case $L_{j}=\partial_{x_{j}}$ and $n=N$ (see \cite{HP2}). 

 \begin{corollary}\label{order11}
 Assume that system of vector fields $L_{1},...,L_{n}$, $n\geq 2$ is linearly independent and elliptic and that $k$ is neither $1$ nor $n-1$. Then every point $x_{0} \in \Omega$ is contained in a ball $B=B(x_{0},r) \subset \Omega$ such that 
 \begin{equation}\nonumber
 \sum_{|I|=k}\int_{\R^{N}} \frac{|u_{I}(x)|}{|x|}\,dx\leq C \left( \|d_{\L,k}u\|_{L^{1}}+\|d^{*}_{\L,k}u\|_{L^{1}} \right), \  u \in C_{c}^{\infty}(B;\Lambda^{k}\R^{N}),
 \end{equation}
 for some $C=C(B)>0$.  
 \end{corollary}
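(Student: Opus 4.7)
The plan is to cast the desired inequality as an instance of Proposition \ref{prop4.4}. Let $A(x,D):=(d_{\L,k},d_{\L,k}^{*})$, viewed as a first-order linear differential operator from $\Lambda^{k}\R^{N}$-valued sections to $(\Lambda^{k+1}\R^{N}\oplus\Lambda^{k-1}\R^{N})$-valued sections. Then $|A(x,D)u|$ is pointwise comparable to $|d_{\L,k}u|+|d_{\L,k}^{*}u|$, while $|u|\simeq\sum_{|I|=k}|u_{I}|$. After translation we may assume $x_{0}=0$, in which case the sought-for bound reads
\[
\int_{\R^{N}} \frac{|u(x)|}{|x|}\,dx\leq C\,\|A(x,D)u\|_{L^{1}},\qquad u\in C_{c}^{\infty}(B;\Lambda^{k}\R^{N}),
\]
which is precisely Proposition \ref{prop4.4} with $\nu=\ell=1$ and $q=1$, provided that $A(x,D)$ is elliptic and canceling on $\Omega$.

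Thus the whole task reduces to verifying these two properties of $A(x,D)$ at every $x_{0}\in\Omega$. For each $\xi\in\R^{N}\setminus\{0\}$, set $\tilde\xi:=\sum_{j=1}^{n}\sigma_{L_{j}}(x_{0},\xi)\,dx_{j}\in\C^{n}$; by the ellipticity of the system $\{L_{j}\}$ one has $\tilde\xi\neq 0$. Using \eqref{4.5a}, the principal symbol of $A(x_{0},D)$ at $\xi$ acts on $u\in\Lambda^{k}$ by $u\mapsto(\tilde\xi\wedge u,\,\iota^{*}_{\tilde\xi}u)$, where $\iota^{*}_{\tilde\xi}$ denotes the algebraic adjoint of exterior multiplication by $\tilde\xi$. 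Consequently the ellipticity and cancellation of $A(x_{0},D)$ become two algebraic facts about the Hodge-de Rham symbol evaluated at the complex covector $\tilde\xi$: (i) the map $u\mapsto(\tilde\xi\wedge u,\,\iota^{*}_{\tilde\xi}u)$ is injective whenever $\tilde\xi\ne0$; and (ii) when $k\notin\{1,n-1\}$, the intersection of its ranges, as $\xi$ varies over $\R^{N}\setminus\{0\}$, is trivial.

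The hardest step is (ii). Although the classical Hodge-de Rham argument (recalled in the Hodge-de Rham example of Subsection \ref{sec 4}) yields cancellation for real covectors, here the $\tilde\xi$ are a priori complex, so one must check that even when the symbols of the $L_{j}$ take complex values, the intersection of images as $\xi$ ranges over $\R^{N}\setminus\{0\}$ still collapses to $\{0\}$ when $k\neq 1,n-1$. This is exactly the symbol-level analysis performed for pseudo-complexes $\{d_{\L}\}$ in \cite{HP2} and \cite{HP3}, from which both (i) and (ii) can be read off under the standing hypotheses. Once $A(x,D)$ is thereby established to be elliptic and canceling on $\Omega$, Proposition \ref{prop4.4} supplies the ball $B=B(x_{0},r)$ and the constant $C=C(B)$ in the statement, completing the proof.
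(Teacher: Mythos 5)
Your proposal is correct and takes essentially the same route as the paper: both reduce the corollary to Proposition \ref{prop4.4} applied to $A(x,D)=(d_{\L,k},d^*_{\L,k})$ with $\nu=\ell=1$, $q=1$, and both defer the verification that this operator is elliptic and canceling (precisely when $k\notin\{1,n-1\}$) to the symbol-level analysis in \cite{HP3}. The paper's own proof is just a one-line citation of that fact, whereas you spell out the symbol $u\mapsto(\tilde\xi\wedge u,\iota^*_{\tilde\xi}u)$ and flag the complex-covector subtlety before invoking the same references, so the content matches.
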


The validity of this inequality is due to fact that the elliptic operator 
$$(d_{\L,k},d^{*}_{\L,k}):C_{c}^{\infty}(\Omega; \Lambda^{k}\R^{N}) \rightarrow C_{c}^{\infty}(\Omega;\Lambda^{k+1}\R^{N}) \times C_{c}^{\infty}(\Omega;\Lambda^{k-1}\R^{N})$$
is canceling if only if $k$ is neither 1 nor $n-1$ (\cite[p.15]{HP3}).
An extension of this result may be considered for higher order pseudo-complexes (order $2m+1$) defined by $S_{\L,k,2m+1}{\doteq}d_{\L,k}(d^{\ast}_{\L,k}d_{\L,k})^{m}$ and $m \in \N$ (\cite[p.16]{HP3}).


\end{document}